\documentclass[10pt, conference, compsocconf]{IEEEtran}
\usepackage[]{amsmath,amssymb,amsfonts,latexsym,amsthm,enumerate,fullpage}
\newcommand{\remove}[1]{}

\newcommand{\calB}{{\cal B}}

\def\F{\mathbb{F}}
\def\Q{\mathbb{Q}}
\def\R{\mathbb{R}}

\def\P{\mathbb{P}}

\newtheorem{thm}{Theorem}[section]
\newtheorem{definition}[thm]{Definition}

\newtheorem{question}[thm]{Question}
\newtheorem{cor}[thm]{Corollary}

\newtheorem{lemma}[thm]{Lemma}
\newtheorem{proposition}[thm]{Proposition}

\parindent=0in
\parskip=0.1in

\begin{document}
\title{Ramanujan Complexes and bounded degree topological expanders}
\author{\IEEEauthorblockN{Tali Kaufman}
\IEEEauthorblockA{Bar-Ilan University\\
Israel\\
kaufmant@mit.edu}
\and
\IEEEauthorblockN{David Kazhdan}
\IEEEauthorblockA{Hebrew University\\
Israel\\
kazhdan.david@gmail.com}
\and
\IEEEauthorblockN{Alexander Lubotzky}
\IEEEauthorblockA{Hebrew University\\
Israel\\
alexlub@math.huji.ac.il}
}
%\author{Tali Kaufman \thanks{Bar-Ilan University, ISRAEL. Email: \texttt{kaufmant@mit.edu}.
%Research supported in part by the Alon Fellowship, IRG, ERC and BSF.}
% \and David Kazhdan \thanks{Hebrew University, ISRAEL. Email: \texttt{kazhdan.david@gmail.com}.
%Research supported in part by the NSF, BSF and ERC.}
% \and Alexander Lubotzky \thanks{Hebrew University, ISRAEL. Email: \texttt{alexlub@math.huji.ac.il}.
%Research supported in part by the ERC, NSF and ISF.}}
%%\author{Tali Kaufman, David Kazhdan, Alexander Lubotzky}
\maketitle
\begin{abstract}
Expander graphs have been a focus of attention in computer science in the last four decades. In recent years a high dimensional theory of expanders is emerging. There are several possible generalizations of the theory of {\em expansion} to simplicial complexes, among them stand out {\em coboundary expansion} and {\em topological expanders}. It is known that for every $d$ there are {\em unbounded degree} simplicial complexes of dimension $d$ with these properties. However, a major open problem, formulated by Gromov, is whether {\em bounded degree} high dimensional expanders, according to these definitions, exist for $d \geq 2$. We present an explicit construction of bounded degree complexes of dimension $d=2$ which are high dimensional expanders. More precisely, our main result says that the $2$-skeletons of the $3$-dimensional Ramanujan complexes are topological expanders. Assuming a conjecture of Serre on the congruence subgroup property, infinitely many of them are also coboundary expanders.

%A family of $d$-dimensional simplicial complexes $\{X_{\alpha}\}$ is said to have the {\em topological overlapping property} if for very {\em continuous} map $f:X_{\alpha} \rightarrow \R^d$, there exists $z \in \R^d$ which is covered by constant fraction of the simplicies of $X_{\alpha}$.
%Expander graphs have this property for $d=1$. Gromov asked whether there exist {\em bounded degree} complexes of dimension $d \geq 2$ with the topological overlapping property.
%
%Independently, Linial and Meshulam studied high dimensional notion of expansion called {\em coboundary expansion} and asked whether bounded degree coboundary expanders exist for $d \geq 2$.
%
%We show the first bounded degree complexes of dimension $d=2$ which have the topological overlapping property. Assuming Serre's conjecture on the congruence subgroup property, our complexes are also bounded degree coboundary expanders. Our results are obtained by showing "property testing results": the cocycles in these complexes are testable within the space of cochains.

\end{abstract}

\begin{IEEEkeywords}
high dimensional expanders, topological expanders, topological overlapping, Ramanujan complexes.
\end{IEEEkeywords}

%\begin{abstract}
%Gromov and independently Linial and Meshulam suggested two related notions of high dimensional expanders. Gromov defined {\em topological expanders} while Linial and Meshulam defined {\em coboundary expanders}. Bounded degree high dimensional expanders of both types were not known to exist. We provide the first construction of {\em bounded degree} topological expanders. We show that assuming Serre's conjecture, our expanders are also bounded degree coboundary expanders. Our proof is inspired by the relation we have discovered between high dimensional expanders and property testing.
%%Several isoparametric inequalities for Ramanujan Complexes are proven, which gives some linear size bounds on the $\F_2$ systolic invariants of these simplicial complexes. We deduce that there exist $2$-dimensional simplicial complexes of bounded degree with the topological overlapping property. This answers a question asked by Gromov~\cite{Gromov}.
%\end{abstract}

\section{Introduction}\label{section:intro}
In the last four decades expander graphs have played an important role in computer science (see~\cite{HooryLinialWigderson} and the references therein) and more recently also in pure mathematics (see~\cite{LubotzkyAMS}). In the last few years a high-dimensional theory of expanders is starting to emerge (see~\cite{LubotzkyJapan} and the references therein). This theory has already found some applications in computer science (e.g. property testing~\cite{KaufmanLubotzkyPT}), combinatorics (e.g. random simplicial complexes~\cite{LinialMeshulam},~\cite{MeshulamWallach}), computational geometry (\cite{NewmanRabinovich}) and in topology (overlapping properties~\cite{Gromov}). Among the possible extensions of the notion of expanders to high dimensional simplicial complexes stand out the notions of {\em topological expanders} and {\em coboundary expansion} (whose definition is a bit technical and needs the language of cohomology, but it has already been proven to be useful in all these areas.).
%: (a) The study of random simplicial complexes, \'{a} la the model of Linial and Meshulam (see ~\cite{LinialMeshulam} and~\cite{MeshulamWallach}) (b) Property testing (see~\cite{KaufmanLubotzkyPT}) and the deepest aspect: (c) Topological overlapping of complexes~\cite{Gromov}.
%\tnote{elaborate more on the relation to property testing?}
%\tnote{What about Ilan Newman, that claims to have defined high-dim exp?}

Let us elaborate on coboundary expanders, on the topological overlapping and on the connection between them. The notations and terminology used below will be explained in details in Sections~\ref{section:coboundary-expansion} and~\ref{section:topological-overlapping}.

Recall that a {\em simplicial complex} $X$ on a set of vertices $V$ is a collection of finite subsets of $V$ (called {\em faces}) closed under inclusion, i.e., if $G \subseteq F \in X$ then $G \in X$. For $F \in X$, $\mbox{dim}(F) := |F|-1$ and $X$ is of {\em dimension} $d$, if $\mbox{max}\{\mbox{dim}(F) \mbox{ } | \mbox{ } F \in X\} = d$. It is a {\em pure} simplicial complex of dimension $d$ if all maximal faces (called {\em facets}) of $X$ are of cardinality $d+1$. Given such a simplicial complex one can associate with it a hypergraph $H=\tilde{X}$ with the set of vertices $V$ and the (hyper) edges of $H$ are the facets of $X$. So, $H$ is a $(d+1)$-uniform hypergraph. Given a $(d+1)$-uniform hypergraph $H$, one can associate with it a pure simplicial complex $X$ of dimension $d$: $X$ will be the collection of all subsets of the (hyper) edges of $H$. We see, therefore, that uniform hypergraphs and pure simplicial complexes are actually the same. It will be more convenient for us to work with simplicial complexes.

Given a finite simplicial complex of dimension $d$, we denote by $X(i)$ the set of its faces of dimension $i$, for $i=-1,0, 1,\cdots, d$. So, $X(-1) = \{\emptyset\}$, $X(0) = V$ - the vertices, $X(1)$ - the edges, $X(2)$ - the triangles etc. Given $\sigma \in X(i)$, let $c(\sigma) = \# \{\tau \in X(d) \mbox{ } | \mbox{ } \sigma \subseteq \tau\}$ and  $wt(\sigma) = \frac{c(\sigma)}{{d+1 \choose i+1}|X(d)|}$,  so $\sum_{\sigma \in X(i)}wt(\sigma)=1$.

Let $C^i=C^i(X,\F_2)$ be the $i$-cochains of $X$ over $\F_2$, i.e., the $\F_2$ vector space of all functions $f:X(i) \rightarrow \F_2$. Every such an $f$ can be considered also as a subset of $X(i)$. Let $\delta_i:C^i \rightarrow C^{i+1}$ be the $i$-coboundary map,
$$\delta_i(f)(\tau) = \sum_{\sigma \subseteq \tau, \sigma \in X(i)} f(\sigma), \mbox{ for } \tau \in X(i+1).$$

It is well known (and easy to prove) that $\delta_i \circ \delta_{i-1} = 0$, hence $B^i(X,\F_2) = \mbox{Image}(\delta_{i-1})$, the {\em $i$-coboundary} space, is contained in $Z^i(X,\F_2) = \mbox{Ker}(\delta_i)$, the {\em $i$-cocycle} space. The quotient space $H^i=Z^i/B^i$ is the {\em $i$-cohomology} group of $X$.

If $X$ is of dimension $1$, i.e., $X$ is a graph, one can easily check that $B^0=\{0,\mathbf{1}\}$ (where $\mathbf{1}$ is the constant function) and $H^0 = 0$ if and only if $X$ is connected. Moreover, $B^1$ is exactly the collections of "cuts" of $X$, namely, if $f \in C^0$ (so $f$ can be thought of as a subset $A$ of $V = X(0)$), $\delta(f)$ is $E(A, \bar{A})$, the set of edges from $A$ to its complement.

Finally, observe that the weight function on $X(i)$ induces a "norm" on $C^i(X,\F_2)$ defined by: $||f|| := \sum_{\sigma \in f}wt(\sigma)$. If $f \in C^i(X,\F_2)$, we denote $[f] := f+B^i(X,\F_2)$, i.e., the coset of $f$ modulo $B^i(X,\F_2)$ and the norm of the coset $||[f]||: = \mbox{min}\{||g|| \mbox{ } | \mbox{ } g \in [f]\}$. One can easily checks that $||[f]||$ is actually equal to "the distance" between $f$ and $B^i(X,\F_2)$ in terms of the norm $|| \cdot ||$.

\begin{definition}~\label{def-coboundary-exp} Let $X$ be a finite $d$-dimensional simplicial complex and $0 \leq i \leq d-1$. The $i$-th {\em $\F_2$-coboundary expansion} $\epsilon_i$ of $X$ is defined as
$$\epsilon_i = \mbox{min} \{\frac{||\delta_i f ||}{||[f]||} | f \in C^i \setminus B^i\},  $$

A family $\{X_a\}_{a\in A}$ of $d$-dimensional pure complexes is called $\epsilon$-{\em coboundary expander(s)} if there exists $\epsilon > 0$ such that $\epsilon_i(X_a) \geq \epsilon$ for every $0 \leq i \leq d-1$ and every $a \in A$.
\end{definition}

The reader is encouraged to check the case in which $X$ is a $1$-dimensional simplicial complex, i.e., a graph. In this case $\epsilon_0$ is equal to the normalized Cheeger constant of the graph.

%A family of pure simplicial complexes of dimension $d$ will be called {\em coboundary expanders} if they are all $\epsilon$-coboundary expanders for some $\epsilon > 0$.

The notion of coboundary expanders is (essentially) due to Linial-Meshulam~\cite{LinialMeshulam} and Gromov~\cite{Gromov} (see also~\cite{MeshulamWallach} and~\cite{DK}). But, there are other ways to generalize expander graphs to hypergraphs.

We pass now to "overlapping properties" and to geometric and topological expanders.

\begin{definition} Let $X$ be a finite $d$-dimensional pure simplicial complex.
\begin{enumerate}
\item We say that $X$ has the {\em $\epsilon$-geometric overlapping property} if for every map $f:X(0) \rightarrow \R^d$, there exists a point $z \in \R^d$ which is covered by at least $\epsilon$-fraction of the images of the faces in $X(d)$ under $\tilde{f}$, where $\tilde{f}$ is the unique affine extension of $f$ to a map from $X$ to $\R^d$.
\item We say that $X$ has the {\em $\epsilon$-topological overlapping property} if the same conclusion holds for every $f$ and every $\tilde{f}$, where this time $\tilde{f}$ is any continuous extension of $f$ to a map from $X$ to $\R^d$.
\end{enumerate}
\end{definition}

A family of $d$-dimensional pure simplicial complexes is a family of {\em geometric (resp. topological) expanders} if they have the $\epsilon$-geometric (resp. topological) overlapping property for the same $\epsilon > 0$.

Clearly topological expanders are geometric expanders.

Expander graphs have the topological overlapping property. Indeed, if $f:V \rightarrow \R$ maps the vertices of $V$ into the real line, then a point $z \in \R$ which is chosen so that half of $f(V)$ is above $z$ and half below, is covered by a constant fraction of the images of the edges. Thus, the overlapping property can also be considered as an extension of expansion from graphs to simplicial complexes.

A classical result of Boros and F\"{u}redi~\cite{BorosFuredi} (for $d=2$) and B\'{a}r\'{a}ny~\cite{Barany} (for general $d \geq 2$) asserts that there exists $\epsilon_d >0$ such that given any set $P$ of $n$ points in $\R^d$, there exists $z \in \R^d$ which is contained in at least $\epsilon_d$-fraction of the ${n \choose d+1}$ simplicies determined by $P$. So, Barany's theorem is the statement that $\Delta_n^{(d)}$ - the complete $d$ - dimensional simplicial complex on $n$ vertices of dimension $d$-are geometric expanders. Gromov proved the remarkable result, that they also have the topological overlapping property! (The reader is encouraged to think about the case $d=2$ to see how non-trivial is this result and even somewhat counter intuitive!)
%Moreover, he went ahead and showed that various other families of simplicial complexes (of fixed dimension $d$) have the topological overlapping property, e.g., spherical buildings (see also~\cite{LubotzkyMeshulamMozes}). Here we say that a family of simplicial complexes of dimension $d$ has the geometric (resp. topological) overlapping property if they all have it with the same $\epsilon > 0$.

There are several methods to show "geometric overlapping". On the other hand, all the results on "topological overlapping" are derived via the following theorem of Gromov, which makes a connection between coboundary expansion and topological expanders.

\begin{thm}[coboundary expanders are topological expanders](\cite{Gromov}, see~\cite{KaufmanWagner} for a simplified proof)\label{thm:Gromov}
If $X$ is a finite simplicial complex of dimension $d$, with $\epsilon_i(X) \geq \epsilon > 0$ for every $i=0,\cdots,d-1$, then $X$ has $\epsilon'$-topological overlapping property for some $\epsilon' > 0$ depending on $d$ and $\epsilon$.
\end{thm}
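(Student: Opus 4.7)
The plan is to follow Gromov's strategy (as clarified by Kaufman--Wagner): given a continuous $\tilde{f}:X \to \R^d$, build a sequence of $\F_2$-cochains of increasing dimension that encode geometric interaction with a generic test simplex $T \subset \R^d$, and then use the coboundary expansion hypothesis in each dimension to propagate a lower bound on norms from dimension $0$ up to dimension $d$, where the final norm is exactly the fraction of $d$-faces whose image covers a specific point.

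Concretely, fix a generic affine simplex $T = [y_0,\dots,y_d] \subset \R^d$. For each face $\tau \subseteq T$ of codimension $i$ I would define a cochain $\xi_i^\tau \in C^i(X,\F_2)$ by $\xi_i^\tau(\sigma) = 1$ iff $\tilde{f}(\sigma)$ meets $\tau$ transversally in an odd number of points; a general position argument makes this well defined after perturbing $T$, and a direct combinatorial check gives the key identity $\delta_i \xi_i^\tau = \sum_{\tau' \subset \tau,\, \mathrm{codim}(\tau') = i+1} \xi_{i+1}^{\tau'}$. At the base, a degree/continuity argument (as one vertex of $T$ is pushed to infinity, $\xi_0^T$ flips across a half-space cochain) shows that for a suitable $T$ the cochain $\xi_0^T$ represents a nontrivial class in $H^0$, giving $\|[\xi_0^T]\| \geq c_0 > 0$.

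Now I would iterate. By $\epsilon_0 \geq \epsilon$, $\|\delta_0 \xi_0^T\| \geq \epsilon \cdot c_0$. Averaging the identity above over $\tau$ (a constant-size sum depending only on $d$) produces some codimension-$1$ face $\tau_1$ with $\|\xi_1^{\tau_1}\|$ bounded below. Replace $\xi_1^{\tau_1}$ by a minimum-norm representative of its coset (which, by transversality, can be argued to stay nontrivial in cohomology since it still encodes genuine geometric crossings), apply $\epsilon_1 \geq \epsilon$, and continue. After $d$ rounds one reaches a vertex $\tau_d$ of $T$, i.e.\ a point $z \in T \subset \R^d$, and a bound $\|\xi_d^{\{z\}}\| \geq c(d,\epsilon) > 0$. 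But $\|\xi_d^{\{z\}}\|$ is precisely (up to constants coming from $wt$) the weighted fraction of $d$-faces $\sigma \in X(d)$ with $z \in \tilde{f}(\sigma)$, which yields the $\epsilon'$-topological overlapping property with $\epsilon'$ a polynomial in $\epsilon$ whose degree depends on $d$.

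The hard part will be the induction step: one must verify that after replacing $\xi_i^{\tau_i}$ by its minimum-norm representative modulo $B^i$, the resulting cochain still has the form $\xi_i^{\tau}$ for some (possibly perturbed) $\tau$, or at least that its coboundary continues to decompose as a sum of $\xi_{i+1}^{\tau'}$'s, so that the geometric interpretation survives into the next level. This forces a careful transversality setup --- allowing $T$ to vary in a small neighborhood at each step --- and demands bookkeeping of the boundary contributions coming from non-generic intersections, which must be absorbed into the error budget without destroying the $\epsilon^{O(d)}$ scaling. Avoiding this pitfall is exactly what the Kaufman--Wagner simplification streamlines by working with a probabilistic ``random simplex'' rather than a deterministic one, replacing the step-by-step rectification by an expectation bound.
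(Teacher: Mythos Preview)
The paper does not itself prove this theorem; it is quoted with attribution to \cite{Gromov} and a pointer to \cite{KaufmanWagner}, and is used throughout as a black box (likewise for the stronger Theorem~\ref{thm:Gromov-systolic}). So there is no in-paper argument to compare your proposal against.

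On the substance of your sketch: the architecture --- intersection cochains $\xi_i^\tau$, the identity $\delta_i\xi_i^\tau=\sum_{\tau'\subset\tau}\xi_{i+1}^{\tau'}$, and propagation via expansion --- is indeed Gromov's, and you have correctly located the crux: the previous step only hands you that $\|\xi_i^{\tau_i}\|$ is large, whereas applying $\epsilon_i\ge\epsilon$ requires $\|[\xi_i^{\tau_i}]\|$, the distance to $B^i$, to be large. Your proposed fix, however, does not work. Passing to the minimum-norm coset representative yields an abstract cochain with no reason whatsoever to be of the form $\xi_i^{\tau}$ for any geometric $\tau$, so the coboundary identity you need at the next step is lost; ``still encodes genuine geometric crossings'' is not a property preserved under adding an arbitrary element of $B^i$, and ``nontrivial in cohomology'' is vacuous here since $\epsilon_i>0$ forces $H^i(X,\F_2)=0$. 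The way the gap is actually closed (in \cite{Gromov} and more transparently in \cite{KaufmanWagner}) is not by rectifying the representative back to geometric form but by reorganizing the induction in the cofilling language: one uses $\mu_i\le 1/\epsilon$ to \emph{construct} small cochains $\gamma$ with prescribed coboundary equal to the relevant combination of $\xi$'s, then uses $H^i=0$ to write $\xi_i^{\tau}-\gamma$ as an explicit coboundary, so that the geometric identity is maintained by construction at each stage rather than hoped for after an uncontrolled replacement. The random-simplex averaging you allude to is a device inside this scheme, not a substitute for the missing step.
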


So, coboundary expanders are topological expanders (and hence also geometric expanders). The complete $d$-dimensional complexes on $n$ vertices ($d$ fixed, $n \rightarrow \infty$) are coboundary expanders (this was proved in~\cite{MeshulamWallach} and~\cite{Gromov} independently). Similarly, the finite spherical buildings are coboundary expanders (\cite{Gromov}, see~\cite{LubotzkyMeshulamMozes} for a proof and a generalization to base-transitive matroids). In~\cite{LubotzkyMeshulam} a random model of $2$-dimensional simplicial complexes is given, with a complete $1$-skeleton, based on latin squares. This gives coboundary expanders of bounded {\em edges} degree. But all the known examples so far have unbounded vertex degree.

This raise the very basic question of the existence of {\bf high dimensional bounded degree expanders}. It is interesting to compare this with the one dimensional case, i.e., graphs.

It is trivial to show that the complete graphs are expanders. Barany's theorem is exactly the statement that the complete complexes are geometric expanders, and Gromov's deep result gives that they are also topological expanders. It is less trivial that there are families of expander graphs of bounded degree, but by now, there is a good number of methods to show that: random methods, property ($T$), Ramanujan conjecture, the zig-zag product and interlacing polynomials (see~\cite{LubotzkyBook},~\cite{HooryLinialWigderson}~\cite{MarcusSpielmanSrivastavaInterlacing} and the references therein).

In the higher dimensional case the situation is much more difficult and very little is known in this direction. The only case in which there are satisfactory answers is the case of geometric expanders: In~\cite{FGLNP} it is shown by random and explicit methods that geometric expanders of bounded degree do exist. But the examples there are not coboundary expanders and it is not known if they are topological expanders. The following two basic problems were left open:

\begin{question}\label{question:bounded-deg-top-overlapping}(Gromov~\cite{Gromov})  For a fixed $d \geq 2$, is there an infinite family of $d$-dimensional {\bf bounded degree topological expanders}?
\end{question}

\begin{question}\label{question:bounded-deg-coboundary-exp}(see~\cite{DK}) For a fixed $d \geq 2$, is there an infinite family of $d$-dimensional {\bf bounded degree $\F_2$-coboundary expanders}?
\end{question}

Here by bounded degree we always mean that every vertex is contained in a bounded number of faces.

Of course, Theorem~\ref{thm:Gromov} shows that a positive answer to Question~\ref{question:bounded-deg-coboundary-exp} implies also an affirmative answer to Question~\ref{question:bounded-deg-top-overlapping}. The questions have been completely open even for random methods. %Even random constructions are not known to answer Questions~\ref{question:bounded-deg-top-overlapping} and~\ref{question:bounded-deg-coboundary-exp}.

%\tnote{even random constructions are not known; our construction is explicit}.

The main goal of this paper is to announce and to sketch a proof for a positive answer to Question~\ref{question:bounded-deg-top-overlapping} for $d=2$. This gives the first examples of high dimensional bounded degree topological expanders. Along the way we will also see that if one accepts a special case of Serre's conjecture on the congruence subgroup property, then we get also an affirmative answer to Question~\ref{question:bounded-deg-coboundary-exp} for $d=2$.  More details will be given in Section~\ref{section:isoparametric} and Section~\ref{section:proof-main-thm}. Let us now give only the main points.

In~\cite{Gromov}, Gromov suggested that the Ramanujan complexes (see~\cite{LSV1}) of dimension $2$ have the topological overlapping property, and proved a partial result in this direction. (To be more precise, he proved this property when $\tilde{f}$ in Definition~\ref{def-coboundary-exp} is assumed to be at most $k$ to $1$, for bounded $k$). We fell short from proving this, but we prove:

\begin{thm}\label{thm:main-thm}
Let $q$ be a sufficiently large prime power, and let $F=\F_q((t))$, the field of Laurent power series over the finite field $\F_q$. Let $\{Y_{a}\}_{a \in A}$ be the family of $3$-dimensional non-partite Ramanujan complexes obtained from the Bruhat-Tits building associated with $PGL_4(F)$ (see~\cite{LSV1,LSV2}). For each such $Y_{a}$, let $X_{a} = Y_{a}^{(2)}$ - the $2$-skeleton of $Y_{a}$. Then, the family of $2$-dimensional simplicial complexes $\{X_{a}\}_{a \in A}$ is an infinite family of topological expanders of degree $O(q^5)$ (i.e., every vertex is contained in at most $O(q^5)$ simplicies).
\end{thm}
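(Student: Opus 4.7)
The plan is to derive topological overlapping of $X_a$ not from the full $\F_2$-coboundary expansion of Definition~\ref{def-coboundary-exp} (which would require the vanishing of $H^1(X_a,\F_2)$, a consequence of Serre's conjecture), but from the weaker combination of \emph{cosystolic expansion} together with a uniform lower bound on the norm of any minimum-weight representative of a non-trivial $1$-cocycle. A mild adaptation of the proof of Theorem~\ref{thm:Gromov} shows that these two properties still entail $\epsilon'$-topological overlapping for some $\epsilon'>0$ depending only on $d$ and the cosystolic expansion constant.

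For the $0$-dimensional piece, the $1$-skeleton of $X_a$ coincides with that of $Y_a$, and by~\cite{LSV1,LSV2} the Ramanujan property supplies a uniform spectral gap for the normalized adjacency operator of that graph; the weighted Cheeger inequality then yields $\epsilon_0(X_a)\geq\epsilon>0$ uniformly in $a$. The heart of the argument is in dimension $1$, where I would run a Garland-style local-to-global approach. Every vertex link in $Y_a$ is isomorphic to a finite spherical building of type $A_3$ over $\F_q$, and every edge link is a spherical building of type $A_2$; both are coboundary expanders by~\cite{Gromov} and~\cite{LubotzkyMeshulamMozes}. Combining these local coboundary-expansion constants with the Ramanujan spectral gap in higher degrees propagates local expansion to a global cosystolic expansion of $X_a$ with constants independent of $a$.

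What remains is the lower bound on the norm of any non-trivial $1$-cocycle. Since $X_a=Y_a^{(2)}$, one has $H^1(X_a,\F_2)=H^1(Y_a,\F_2)=H^1(\Gamma_a,\F_2)$, where $\Gamma_a$ is the cocompact lattice in $PGL_4(F)$ with $Y_a=\Gamma_a\backslash\mathcal{B}$. Under Serre's congruence subgroup conjecture, $H^1(\Gamma_a,\F_2)=0$ for infinitely many $a$, the minimum-cocycle bound becomes vacuous, and the cosystolic expansion upgrades to full coboundary expansion, answering Question~\ref{question:bounded-deg-coboundary-exp} as well. Unconditionally, I would establish the bound by a direct isoperimetric argument in the building: a representative of a non-trivial class that were too small would concentrate on a subcomplex of small boundary, contradicting the isoperimetric inequalities derived in the style of Section~\ref{section:isoparametric} from the Ramanujan spectral bounds and the link structure of the building.

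The main obstacle I anticipate is precisely this last step, namely the uniform lower bound on the norm of minimum-weight non-trivial $1$-cocycles \emph{without} invoking Serre's conjecture: one needs a geometric isoperimetric statement that rules out small ``topological defects'' in $X_a$, and this is where the interaction between the combinatorics of the $A_3$ Bruhat-Tits building and the Ramanujan spectral bounds is the most delicate. The bounded-degree claim of at most $O(q^5)$ simplices containing each vertex is, by contrast, immediate from the local structure of the building of $PGL_4(F)$, since the link of each vertex in the $2$-skeleton has $O(q^5)$ edges.
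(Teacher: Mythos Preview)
Your overall strategy matches the paper's: invoke the extension of Gromov's theorem (Theorem~\ref{thm:Gromov-systolic}) that needs only cocycle (cosystolic) expansion together with a linear systolic lower bound, and derive both ingredients from the coboundary expansion of the links combined with the Ramanujan spectral gap on the $1$-skeleton. The bounded-degree count is handled exactly as you say.

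The one structural difference worth noting is that the paper does \emph{not} treat the cosystolic expansion and the systolic bound as two separate problems. It proves a single isoperimetric inequality (Theorem~\ref{thm:isoparametric}): for every \emph{locally minimal} $\alpha\in C^i(Y,\F_2)$ with $\|\alpha\|\le\eta_i$ one has $\|\delta_i\alpha\|\ge\epsilon_i\|\alpha\|$. This is paired with a short ``local minimization'' lemma: any cochain $\alpha$ can be replaced, within its coboundary class, by a locally minimal $\tilde\alpha$ with $\|\tilde\alpha\|\le\|\alpha\|$ and $\alpha-\tilde\alpha=\delta_{i-1}\gamma$ where $\|\gamma\|\le c\|\alpha\|$. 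Both hypotheses of Theorem~\ref{thm:Gromov-systolic} then fall out in a few lines: a cocycle of norm below $\eta_i$ localizes to a small locally minimal cocycle, which the inequality forces to be $0$, hence a coboundary (this is the systole); and a coboundary $\beta$ of norm below $\eta_i$ likewise localizes to $0$, so $\beta=\delta\gamma$ with $\|\gamma\|$ controlled (this is the cofilling bound). So the step you single out as the ``main obstacle''---the unconditional systolic lower bound---is in the paper not an independent argument at all but an immediate corollary of Theorem~\ref{thm:isoparametric}. The genuine work lies entirely in that inequality, and the mechanism is a combinatorial thin/thick decomposition of vertices (and, in dimension~$3$, of edges) rather than a Garland-type spectral transfer; the spectral gap of $Y^{(1)}$ enters only to bound the number of edges among thick vertices.
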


In spite of the fairly abstract formulation of Theorem~\ref{thm:main-thm}, let us stress that it gives an explicit construction of examples of topological expanders. A detailed description of the Ramanujan complexes $Y_{a}$'s of the theorem, as Cayley complexes of dimension $3$ of specific finite groups ($PSL_4(q^e)$ in our case) with explicit sets of generators, is given in~\cite[Section 9]{LSV2}. Recall that a $d$-dimensional Cayley complex of a group $G$ with respect to a symmetric set of generators $S$, is the simplicial complex whose set of vertices is $G$ and for $i \leq d$, $\{g_0,\cdots, g_i\}$ forms an $i$-face if for every $0\leq t\neq s \leq i$, $g_t^{-1}g_s \in S$. This is a clique complex of the Cayley graph $Cay(G:S)$, of dimension $d$.
The $X_{a}$'s are simply the $2$-skeletons of $Y_{a}$'s, i.e., ignoring the $3$ simplices. So, the $X_{a}$'s are Cayley complexes of dimension $2$.
Presenting the details of the exact construction requires a lot of notation, so we refer the reader to~\cite{LSV2}.

%We refer the reader to section~\ref{section:isoparametric},~\ref{section:proof-main-thm} and~\ref{section:isoparametric-proof} where we will say more %on the notions mentioned in the theorem as well as sketch the proof.

In Section~\ref{section:isoparametric} we will elaborate on Ramanujan complexes and in Sections~\ref{section:proof-main-thm} and~\ref{section:isoparametric-proof} we sketch the proof. Crucial ingredients in the proofs are the facts that the $1$-skeleton of $Y_{a}$ is nearly a Ramanujan graph and the links of $Y_{a}$ are coboundary expanders.

We only mention here that the main technical tools are new $\F_2$ isoperimetric inequalities.
Such inequalities are relevant to classical and quantum error correcting codes (compare with~\cite{Zemor} and~\cite{LubotzkyGuth}).
In fact, our first motivation to study these inequalities came from coding theory. We hope to come back to this direction in future works.

Here we should stress that in general the $X_{a}$'s in the theorem are {\bf not} $\F_2$-coboundary expanders. This is due to the fact (see~\cite{KaufmanKazhdanLubotzky} for a proof) that it is possible that $H^1(X_{a},\F_2) \neq 0$, while for coboundary expanders $X$, the $i$-cohomology group over $\F_2$ of $X$ must vanish for $0 \leq i < \mbox{dim} X$. On the other hand, our proof will show that this is the only obstacle for our complexes $X_{a}$ to be coboundary expanders.

Now, if the arithmetic lattices used in~\cite{LSV2} to construct the Rammanujan complexes $Y_{a}$ in Theorem~\ref{thm:main-thm} (the so called "Cartwright-Steger lattices") satisfy the congruence subgroup property, then one can deduce that for infinitely many of the $X_{a}$'s, $H^1(X_{a},\F_2)$ does vanish. These $X_{a}$'s are therefore also coboundary expanders of dimension $2$.
According to a well known general conjecture of Serre~\cite[p.489]{Serre}, the Cartwright-Steger arithmetic groups, being lattices in high rank Lie groups, should indeed satisfy the congruence subgroup property. The general conjecture of Serre has been proven in most cases, but unfortunately, not yet for the Cartwright-Steger lattices. So, in summary, assuming Serre's conjecture (in fact, a very special case of it and furthermore, a weak form of it for this special case-see~\cite{KaufmanKazhdanLubotzky}) our work gives a positive answer also to Question~\ref{question:bounded-deg-coboundary-exp} for $d=2$. Unconditionally, we give a positive answer to Question~\ref{question:bounded-deg-top-overlapping} for $d=2$. More details are given in Section $5$ and a full proof in~\cite{KaufmanKazhdanLubotzky}.

\section{$\F_2$-coboundary expansion}\label{section:coboundary-expansion}

%Let $X$ be a finite simplicial complex.
%so $X$ is a non-empty collection of subsets of the "set of vertices" $V=X(0)$, satisfying $F \in X$ and $G \subseteq F$ implies $G \in X$. Each $F \in X$ is called a {\em face} or {\em simplex} of dimension $\mbox{dim}(F) = |F|-1$. By definition, $d=\mbox{dim}X$ is the maximum dimension of its faces. We assume throughout that $X$ is pure, i.e., all maximal faces are of dimension $d$. We denote, for $-1 \leq i \leq d$, by $X(i)$, the set of $i$-faces, i.e., those of dimension $i$, and $X^{(i)}$ the $i$-skeleton. Of course,
%One can think of $X$ also as a topological space by associating with every face an affine simplex and gluing them together in the obvious way.
To any finite simplicial complex one associates a geometric realization, which is obtained by gluing the affine simplicies along common faces.
So one can talk about affine and continuous functions on $X$.

%By $C^i=C^i(X,\F_2)$ we denote the $i$-cochains over the field $\F_2$, i.e., the functions $\alpha$ from $X(i)$ to $\F_2$. Every such function can also be considered as a subset of $X(i)$.

The {\em $i$-skeleton} of $X$ is the sub-complex $X^{(i)}:= \cup_{-1 \leq j \leq i}X(j)$. Given $\tau \in X(i)$, the {\em link of $X$ at $\tau$} denoted $X_{\tau}$ is the collection of all sets of the form $\sigma \setminus \tau$, where $\sigma \in X$ and $\tau \subseteq \sigma$. Thus, $X_{\tau}$ is a complex of dimension $\mbox{dim}(X) - \mbox{dim}(\tau) -1 = d-i-1$. In particular, for a vertex $v$, the link $X_v$ of $v$ is of dimension $d-1$. A cochain $\alpha \in C^i(X,\F_2)$ defines a cochain $\alpha_v \in C^{i-1}(X_v,\F_2)$ by $\alpha_v(\sigma \setminus \{v\}) := \alpha(\sigma)$ for $\sigma \in X(i)$ containing $v$.

For $\sigma \in X(i)$, denote $c(\sigma):=|\{\tau \in X(d) \mbox{ } | \mbox{ }  \sigma \subseteq \tau\}|$ and $wt(\sigma):=\frac{c(\sigma)}{{d+1 \choose i+1} |X(d)|}$, so $\sum_{\sigma \in X(i)}wt(\sigma) = 1$. For $\alpha \in C^i(X,\F_2)$ we define $||\alpha||:= \sum_{\sigma \in X(i), \alpha(\sigma) \neq 0}wt(\sigma)$. One easily checks that $||.||$ is a "norm" such that $||\alpha|| =0$ iff $\alpha=0$ and $||\alpha_1+ \alpha_2|| \leq ||\alpha_1|| + ||\alpha_2||$ for $\alpha_1, \alpha_2 \in C^i(X,\F_2)$.

A cochain $\alpha \in C^i(X,\F_2)$ is called {\em minimal} if it is of minimal norm within its class $[\alpha]$ modulo $B^i(X,\F_2)$, i.e., $||\alpha|| = ||[\alpha]||$. It is called {\em locally minimal} if for every $v \in X(0)$, $\alpha_v$ is a minimal cochain in $C^{i-1}(X_v, \F_2)$. A minimal cochain is always locally minimal, but not vice versa.

%Throughout this paper we assume that the space $C^i$ has a norm $||\alpha|| \in \R^{ \geq 0}$, satisfying $||\alpha_1 + \alpha_2|| \leq ||\alpha_1 || + ||\alpha_2||$. We also assume that there exists some $0 < M \in \R$ such that for every $0 \leq i \leq d$ and $\alpha \in C^i(X, \F_2)$, $\frac{1}{M}||\alpha||_0 \leq ||\alpha|| \leq M||\alpha||_0$ where $||\alpha||_0 = \frac{|\alpha|}{|X(i)|}$ is the normalized counting norm. In this case we say that $||\cdot||$ is {\em $M$-bounded}. Given a norm on $X$ and $v$ a vertex of $X$, we get an induced norm $||\cdot||_v$ on $X_v$, where for $\gamma \in C^{i-1}(X_v, \F_2)$, $||\gamma||_v = \frac{|X(i)|}{|X_v(i-1)|} \cdot ||\tilde{\gamma}||$ where $\tilde{\gamma}$ is the $i$-cochain of $X$ which is equal to $\gamma$ on the link at $v$ and zero otherwise.

The coboundary map $\delta =\delta_i:C^i \rightarrow C^{i+1}$ is defined as
$$\delta(\alpha)(F) := \sum_{G \subseteq F, |G|=|F|-1}\alpha(G)$$
when $\alpha \in C^i$ and $F \in C^{i+1}$. It is easy to check that $\delta_{i+1} \circ \delta_i =0$.
Hence, $B^i=\mbox{Im}(\delta_{i-1}) \subset Z^i =\mbox{Ker}(\delta_{i})$. The quotient group $Z^i/ B^i = H^i(X,\F_2)$ is the $i$-th cohomology group of $X$. The elements of $B^i$ (resp. $Z^i$) are called $i$-coboundaries (resp. $i$-cocycles).

We can now define the $i$-expansion $\epsilon_i(X)$ of $X$ with respect to the norm $||.||$.

\begin{definition} .

\begin{enumerate}
\item($\F_2$-coboundary expansion) For $i=0,1,\cdots, d-1$, denote
$$\epsilon_i(X) := \mbox{min}\{\frac{||\delta_i \alpha  ||}{||[\alpha] ||} \mbox{ } |  \mbox{ }\alpha \in C^i \setminus B^i \}$$
When $[\alpha] = \alpha+B^i$ and $||[\alpha]|| = \mbox{min}\{||\gamma|| \mbox{ } |  \mbox{ } \gamma \in [\alpha]\}$.

\item($\F_2$-cocycle expansion) For $i=0,1,\cdots, d-1$, denote
$$\tilde{\epsilon}_i(X) := \mbox{min}\{\frac{||\delta_i \alpha  ||}{||\{\alpha\} ||} \mbox{ } |  \mbox{ }\alpha \in C^i \setminus Z^i \}$$
When $\{\alpha\} = \alpha+Z^i$ and $||\{\alpha\}|| = \mbox{min}\{||\gamma|| \mbox{ } |  \mbox{ } \gamma \in \{\alpha\}\}$.

\item(cofilling constant) The $i$-th cofilling constant of $X$, $0 \leq i \leq d$ is
$$\mu_i(X) := \mbox{max}_{0 \neq \beta \in B^{i+1}}\{\frac{1}{||\beta||} \mbox{min}_{\alpha \in C^i, \delta \alpha = \beta} ||\alpha||\}$$.
\end{enumerate}
\end{definition}

If $\{X_j\}_{j \in J}$ is a family of $d$-dimensional simplicial complexes with $\epsilon_i(X_j) \geq \epsilon$ (resp. $\tilde{\epsilon_i}(X_j) \geq \epsilon$) for some $\epsilon > 0$ and every $0 \leq i \leq d-1$ and every $j\in J$, we say that this is a family of {\em coboundary expanders} (resp. {\em cocycle expanders}). Note that $\{X_j\}_{j \in J}$ is a family of cocycle expanders iff there exists $M \in \R$ such that $\mu_i(X_j) \leq M$ for every $i=0, \cdots, d-1$ and $j \in J$.

We believe (see below) that Ramanujan complexes are cocycle expanders but it is shown in~\cite{KaufmanKazhdanLubotzky} that in general they are {\em not} coboundary expanders.

Note also that for $i=-1$, $X(-1)=\{\emptyset\}$ and hence $B^0=\{\mathbf{0},\mathbf{1}\}$, where $\mathbf{1}$ is the constant function. On the other hand $B^1$ is the set of all "cuts" in the $1$-skeleton of $X$. Indeed, if $\alpha \in C^0$, then $\alpha$ is a characteristic function of some subset $A$ of $V = X(0)$ and one easily see that $\delta(\alpha) \in C^1$ is exactly the set of edges from $A$ to $\bar{A}$. Moreover, the coset of $\alpha$ modulo $B^0$, namely $\alpha+B^0$ consists of two elements: $\alpha$ and $\alpha + \mathbf{1}$ or in terms of subsets, $A$ and $\bar{A}$. Recall that the Cheeger constant of a graph is, by definition,
$$h(X):= \mbox{min}_{\emptyset \neq A \subset V, |A| \leq \frac{|V|}{2}}\frac{|E(A,\bar{A})|}{|A|}.$$

One can now easily check:

\begin{proposition}
\label{prop:cofilling-vs-coboundary-exp}
The following hold:
\begin{enumerate}
\item For $0 \leq i \leq d-1$, $\epsilon_i > 0$ iff $H^i(X,\F_2)=0$. In particular, $H^i=0$ for coboundary expanders.
\item Always $\mu_i=\frac{1}{\tilde{\epsilon}_i}$. So if $H^i(X,\F_2)=0$ then $\epsilon_i = \tilde{\epsilon}_i$ and $\mu_i = \frac{1}{\epsilon_i}$.
\item For a regular graph $X$, $ \epsilon_0(X) = \frac{|X(0)|}{|X(1)|} \mbox{min}_{\emptyset \neq A \subset V, |A| \leq \frac{|V|}{2}}\frac{|E(A,\bar{A})|}{|A|} = h(X)\frac{|X(0)|}{|X(1)|}$,
where $E(A,\bar{A})$  is the set of edges from $A$ to $\bar{A}$. So, $\epsilon_0(X)$ is the normalized Cheeger constant.
\end{enumerate}
\end{proposition}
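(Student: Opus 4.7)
The three parts are essentially bookkeeping once the definitions are unpacked carefully, so my plan is to handle them in order and to be careful about the difference between the cosets $[\alpha]=\alpha+B^i$ and $\{\alpha\}=\alpha+Z^i$.

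For part (1), I would argue both directions using only the definitions. If $H^i(X,\F_2)\neq 0$, pick any $\alpha\in Z^i\setminus B^i$; then $\delta_i\alpha=0$ while $\|[\alpha]\|>0$ because $\alpha\notin B^i$, so the ratio $\|\delta_i\alpha\|/\|[\alpha]\|$ vanishes and $\epsilon_i(X)=0$. Conversely, if $H^i(X,\F_2)=0$ then $Z^i=B^i$, so every $\alpha\in C^i\setminus B^i$ lies outside $Z^i$, making $\delta_i\alpha\neq 0$. Since $C^i$ is finite and the set of cosets is finite, the minimum defining $\epsilon_i$ is attained and is strictly positive.

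For part (2), I would rewrite the cocycle expansion so that its form matches the cofilling constant. Observe that $\delta_i\alpha=\delta_i\gamma$ whenever $\gamma\in\alpha+Z^i$, so
\[
\|\{\alpha\}\| \;=\; \min\{\|\gamma\|\,:\,\gamma\in\alpha+Z^i\} \;=\; \min\{\|\gamma\|\,:\,\delta_i\gamma=\delta_i\alpha\}.
\]
Setting $\beta:=\delta_i\alpha$, as $\alpha$ ranges over $C^i\setminus Z^i$, $\beta$ ranges over all nonzero elements of $B^{i+1}$. Hence
\[
\tilde{\epsilon}_i(X) \;=\; \min_{0\neq\beta\in B^{i+1}}\frac{\|\beta\|}{\min\{\|\gamma\|:\delta_i\gamma=\beta\}}
\;=\; \Bigl(\max_{0\neq\beta\in B^{i+1}}\tfrac{1}{\|\beta\|}\min_{\delta_i\gamma=\beta}\|\gamma\|\Bigr)^{\!-1} \;=\; \mu_i(X)^{-1}.
\]
Finally, when $H^i(X,\F_2)=0$ we have $Z^i=B^i$, so $\{\alpha\}=[\alpha]$ and the two expansion constants $\tilde{\epsilon}_i$ and $\epsilon_i$ coincide, giving $\mu_i=1/\epsilon_i$.

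For part (3), I would simply specialize the definition to $d=1$ and compute weights. For a $k$-regular graph, each vertex satisfies $c(v)=k$, so $wt(v)=k/(2|X(1)|)=1/|X(0)|$, while each edge has $wt(e)=1/|X(1)|$. A cochain $\alpha\in C^0$ is the indicator of some $A\subseteq V$, and $\alpha\notin B^0=\{\mathbf{0},\mathbf{1}\}$ iff $A$ is a proper nonempty subset; its coset $[\alpha]=\{\alpha,\alpha+\mathbf{1}\}$ corresponds to $\{A,\bar A\}$, so $\|[\alpha]\|=\min(|A|,|V\setminus A|)/|X(0)|$. A direct check shows $\delta_0\alpha$ is the indicator of $E(A,\bar A)$, so $\|\delta_0\alpha\|=|E(A,\bar A)|/|X(1)|$. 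Dividing and taking the minimum over $A$ with $|A|\leq|V|/2$ gives exactly the claimed formula $\epsilon_0(X)=h(X)\,|X(0)|/|X(1)|$.

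I do not expect any real obstacle: the only subtle point is keeping the two quotient norms $\|[\alpha]\|$ and $\|\{\alpha\}\|$ apart in part (2), and being careful in part (3) that the weights on vertices and on edges are normalized differently (so the factor $|X(0)|/|X(1)|$ is not an artifact but built into the definition of $\|\cdot\|$).
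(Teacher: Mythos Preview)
Your proposal is correct and is precisely the routine verification the paper has in mind: the paper itself gives no proof beyond the remark ``One can now easily check,'' and your unpacking of the definitions (the coset identities in part~(2) and the weight computation in part~(3)) is the natural way to carry out that check.
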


Proposition~\ref{prop:cofilling-vs-coboundary-exp} explains why we can consider the $\epsilon_i$'s as expansion constants of $X$. In a way $\tilde{\epsilon}_i$ capture a situation which for graphs means that the graph is not necessarily connected, but each connected component is an expander.

\section{Topological overlapping}\label{section:topological-overlapping}
In Section~\ref{section:intro} we saw Gromov's Theorem~\ref{thm:Gromov} saying that coboundary expanders are topological expanders. For Ramanujan complexes, it is possible that $H^i \neq 0$, and so they are in general {\em not} coboundary expanders. It was noted by Kaufman and Wagner~\cite{KaufmanWagner} that one may give a more general version of Gromov's Theorem that will also work when $H^i \neq 0$. If $H^i \neq 0$, then one should assume a linear systolic inequality (condition (\ref{item:thm:Gromov-systolic:second}) below) for the non-trivial $i$-cocycles of $Z^i(X,\F_2)$. Here is the theorem in its stronger form.

\begin{thm}\label{thm:Gromov-systolic}
Let $X$ be a $d$-dimensional pure simplicial complex of dimension $d$ and $0 < \mu, \eta \in \R$. Assume
\begin{enumerate}
%\item\label{item:thm:Gromov-systolic:zero} For every $0 \leq i \leq d$, the norm on $C^i(X,\F_2)$ is $M$-bounded.
\item\label{item:thm:Gromov-systolic:first} For every $0 \leq i \leq d-1$, $\mu_i(X) \leq \mu$.
\item\label{item:thm:Gromov-systolic:second} For every $0 \leq i \leq d-1$ and every $\alpha \in Z^i(X,\F_2) \setminus B^i(X,\F_2)$, $||\alpha|| \geq \eta$.
\end{enumerate}
Then there exists $c=c(d,\mu,\eta)$ so that $X$ has $c$-topological overlapping.
\end{thm}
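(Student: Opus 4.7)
The plan is to extend the Kaufman--Wagner simplification of Gromov's Theorem~\ref{thm:Gromov} from the cohomology-vanishing setting to the systolic setting, with the systolic lower bound~\ref{item:thm:Gromov-systolic:second} taking over from the implicit ``$H^i=0$'' and the cofilling hypothesis~\ref{item:thm:Gromov-systolic:first} playing the role of coboundary expansion. Given a continuous $\tilde f:X\to\R^d$, after a generic PL approximation, for a random point $z\in\R^d$ together with a random nested flag $\R^d\supset F^1\supset\cdots\supset F^d=\{z\}$ of affine subspaces with $\dim F^i=d-i$, one builds a cochain cascade: auxiliary cochains $\beta_i\in C^i(X,\F_2)$ (``half-section'' indicators) together with $\alpha_i\in C^i(X,\F_2)$ recording the parity of $|\tilde f(\sigma)\cap F^i|$. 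Transversality yields the cascade identity $\delta_i\beta_i=\alpha_{i+1}$ for $0\leq i\leq d-1$, the decomposition $\alpha_i=\beta_i+\tilde\beta_i$ into positive/negative halves, and $\alpha_d(\sigma)=\mathbf{1}[z\in\tilde f(\sigma)]$ generically. It therefore suffices to prove $\mathbb{E}_{z,F}\|\alpha_d\|\geq c(d,\mu,\eta)$.

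The lower bound is then established by an iterated cofilling scheme. Suppose for contradiction $\|\alpha_d\|<c_0$ for a threshold $c_0=\eta/(C\mu)^d$ to be tuned, with $C$ an absolute constant. Since $\alpha_d\in B^d$, hypothesis~\ref{item:thm:Gromov-systolic:first} furnishes $\beta'_{d-1}$ with $\delta\beta'_{d-1}=\alpha_d$ and $\|\beta'_{d-1}\|\leq\mu\|\alpha_d\|$. The cocycle $\beta_{d-1}-\beta'_{d-1}\in Z^{d-1}$ is precisely where Gromov's original argument would use $H^{d-1}=0$; instead I invoke~\ref{item:thm:Gromov-systolic:second}: provided $\|\beta_{d-1}-\beta'_{d-1}\|<\eta$, this cocycle must lie in $B^{d-1}$, giving $[\beta_{d-1}]=[\beta'_{d-1}]$ and $\|[\beta_{d-1}]\|\leq\mu\|\alpha_d\|$. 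Using $\alpha_{d-1}=\beta_{d-1}+\tilde\beta_{d-1}$, replacing each summand with a minimal representative and averaging over the flag gives $\mathbb{E}\|\alpha_{d-1}\|\leq 2\mu\mathbb{E}\|\alpha_d\|$; iterating one level at a time produces $\mathbb{E}\|[\beta_i]\|\leq(2\mu)^{d-i}\mathbb{E}\|\alpha_d\|$ throughout the cascade. With $c_0$ chosen as above, the systolic invariant $\|\beta_i-\beta'_i\|<\eta$ is preserved at every level.

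At the bottom, $\mathbb{E}\|[\beta_0]\|\leq (2\mu)^d\mathbb{E}\|\alpha_d\|$. On the other hand, the random hyperplane $F^1$ can be sampled so that $\mathbb{E}\|[\beta_0]\|\geq 1/4$ (e.g.\ choosing $F^1$ to approximately bisect a random linear projection of $\tilde f(X(0))$, whence $\|[\beta_0]\|$ measures the imbalance of the induced vertex partition, controlled as in the one-dimensional Cheeger picture of Proposition~\ref{prop:cofilling-vs-coboundary-exp}). Combining these two bounds yields $\mathbb{E}\|\alpha_d\|\geq 1/(4(2\mu)^d)$; together with the smallness cutoff $c_0\leq\eta/(C\mu)^d$ this gives $c(d,\mu,\eta)=\min(1,\eta)/(C\mu)^d$ for a suitable absolute constant $C$, establishing the topological overlapping with the required constant.

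The principal difficulty is the systolic bookkeeping: at each of the $d$ cofilling steps the cocycle differences $\beta_i-\beta'_i$ must have norm strictly below $\eta$ to legally invoke~\ref{item:thm:Gromov-systolic:second} and cross the cohomology obstruction. Because each cofilling step can inflate the norm by a factor $\mu$, the accumulated factor $\mu^d$ must fit inside the $\eta$ budget, which is what forces the overlap constant to be exponentially small in $d$ and linear in $\eta$. A secondary but non-trivial technical point is that the bound $\mathbb{E}\|\alpha_{i+1}\|\leq 2\mathbb{E}\|[\beta_{i+1}]\|$ used in the cascade exploits a symmetry between positive and negative half-section cochains under the random flag; this averaging step must be carried out delicately so as not to destroy the systolic invariant, and is where the cleanest execution of the argument lives.
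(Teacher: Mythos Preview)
The paper does not give its own proof of this theorem: it attributes the observation to Kaufman and Wagner and says ``For a proof of Theorem~\ref{thm:Gromov-systolic} see~\cite{KaufmanWagner}.'' So there is nothing in the paper to compare your argument against beyond that citation. Your outline is indeed the Kaufman--Wagner adaptation of Gromov's cochain-cascade proof, with the systolic hypothesis~(\ref{item:thm:Gromov-systolic:second}) inserted exactly where the original argument would have invoked $H^i=0$; in that sense you are reconstructing the cited proof rather than giving an alternative.

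One point in your sketch deserves care. You write ``provided $\|\beta_{d-1}-\beta'_{d-1}\|<\eta$, this cocycle must lie in $B^{d-1}$,'' but $\beta_{d-1}$ itself is a half-space indicator and typically has norm close to $1/2$, while only $\beta'_{d-1}$ is small; so $\|\beta_{d-1}-\beta'_{d-1}\|$ is \emph{not} small and you cannot apply~(\ref{item:thm:Gromov-systolic:second}) to that difference directly. The actual mechanism is that one never needs $\beta_i-\beta'_i\in B^i$: one continues the cascade with the small cofilling $\beta'_i$, and the systolic hypothesis is invoked only on the small auxiliary cocycles that arise further down (ultimately controlling $\|[\alpha_i]\|$ or the analogous quantity), where the accumulated $\mu^{d-i}$ factor keeps everything below $\eta$. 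Your final paragraph correctly identifies this bookkeeping as the crux, but the specific inequality you propose to check at each step is the wrong one; fixing the target of the systolic bound is what makes the argument go through.
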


In other words,  a family of $d$-dimensional expanders which are cocycle expanders and satidfay "linear systolic inequality" forms a family of topological expanders. The reader may note that the systolic condition (\ref{item:thm:Gromov-systolic:second})  for graphs means that even if the graph is not connected, every connected component is large (and, in particular, there are only bounded number of connected components). Indeed, this plus condition (\ref{item:thm:Gromov-systolic:first}), which as said before, ensures that every connected component is an expander, suffice to deduce topological overlapping for graphs.

For a proof of Theorem~\ref{thm:Gromov-systolic} see~\cite{KaufmanWagner}. Note that if $H^i = 0$ for every $i=0, \cdots, d-1$, condition (\ref{item:thm:Gromov-systolic:second}) above is vacuous and Theorem~\ref{thm:Gromov-systolic} is the same as Theorem~\ref{thm:Gromov}.

\section{Isoperimetric inequalities for Ramanujan complexes}\label{section:isoparametric}
A finite connected $k$-regular graph $Y$ is called a {\em Ramanujan graph} if every eigenvalue $\lambda$ of the adjacency matrix $A=A_Y$ of $Y$ satisfies either $|\lambda|=k$ or $|\lambda| \leq 2\sqrt{k-1}$. By Alon-Boppana Theorem, these are the optimal expanders (at least from a spectral point of view). See for example~\cite{LubotzkyBook} and the references therein, for explicit construction of such graphs as quotients of the Bruhat-Tits tree $\calB_2(F)$ associated with the group $PGL_2(F)$ when $F$ is a local field (e.g. $\Q_p$-the $p$-adic numbers or $\F_q((t))$). Ramanujan graphs are obtained by taking the quotients of $\calB_2(F)$ modulo the action of suitable congruence subgroups $\Gamma$ of an arithmetic cocompact discrete subgroup ($=$lattice) $\Gamma_0$ of $PGL_2(F)$.

The above theory and constructions have been generalized to the higher dimensional case. In~\cite{LSV1} the notion of Ramanujan complexes was defined and explicit constructions of such $d$-dimensional complexes were given in~\cite{LSV2}. This time the complex $Y$ is obtained as a quotient of the Bruhat-Tits building associated with $PGL_{d+1}(F)$, modulo the action of a suitable congruence subgroup $\Gamma$ of an arithmetic lattice $\Gamma_0$ in $PGL_{d+1}(F)$. In~\cite{LSV2} a specific arithmetic lattice $\Gamma_0$ was used. This is the remarkable lattice constructed by Cartwright and Steger~\cite{CS} which acts simply transitive on the vertices of the building.

To keep the exposition simple we will work only with those congruence subgroups of $\Gamma_0$ which destroy completely the coloring of the building (this is an analogue of the non-bipartite case of Ramanujan graphs - see a discussion in~\cite{EvraGLubotzky}). When $\Gamma_0$ is the Cartwright-Steger arithmetic lattice, there are infinitely many congruence subgroups of $\Gamma_0$ with this property~\cite{LSV2}, so the associated complex $\calB \backslash \Gamma$ is non-partite (see~\cite{EvraGLubotzky}).

The reader is referred to these papers (and to a more reader friendly description in~\cite{LubotzkyJapan}) for more details and references. Rather than repeating the definition of Ramanujan complexes, let us give here the few properties of such complexes that we will be using.

Assume $Y$ is a finite non-partite Ramanujan complex of dimension $d$ obtained as a quotient of $\calB_{d+1}(F)$ where $F$ is a local field with residue field $\F_q$. The relevant properties are the following:
\begin{description}
\item[($A$)] The $1$-skeleton $Y^{(1)}$ of $Y$ is a $k$-regular graph when $k=\sum_{i=1}^{d}{d+1 \choose i}_q$, where ${d+1 \choose i}_q$ is the number of subspaces of $\F_q^{d+1}$ of dimension $i$. The non trivial eigenvalues $\lambda$ of $A_{Y^{(1)}}$ satisfy $|\lambda| \leq \sum_{i=1}^{d} {d+1 \choose i}q^{\frac{i(d+1-i)}{2}}$. Thus, $|\lambda| \leq c(d) k^{\frac{1}{2}}$ when $c(d)$ depends only on $d$.
\item[($B$)] The link $Y_v$ of every vertex $v$ is isomorphic to the flag complex of $F_q^{d+1}$. This is the complex of dimension $d-1$ whose vertices are all the non-trivial proper subspaces of $\F_q^{d+1}$ and $\{w_0, \cdots, w_i\}$ forms an $i$-cell if, possibly after reordering,  $w_0 \subset w_1 \subset \cdots \subset w_i$.
\end{description}

Let us spell out the properties for the case we are most interested in this paper - the $3$ dimensional Ramanujan complexes. So here $d=3$ and we have
%\begin{itemize}
\begin{description}
\item[($A_3$)] $Y^{(1)}$ is the $k$-regular graph with $k = {4 \choose 1}_q +{4 \choose 2}_q +{4 \choose 3}_q =2 \frac{q^4-1}{q-1} + \frac{q^4-1}{q-1} \cdot \frac{q^3-1}{q^2-1} \approx q^4$, and every eigenvalue $\lambda$ of $A_{Y^{(1)}}$ is either $k$ or $|\lambda| \leq cq^2$.
\item[($B_3$)] The link $Y_v$ of a vertex is isomorphic to the flag complex of $\F_q^4$. It has therefore vertices of $3$ types $M_1 \cup M_2 \cup M_3$ (corresponding to subspaces of $\F_q^4$ of dimension $1,2,3$). The degree of the vertices in $M_2$ is $2(q+1)$, while those in $M_1 \cup M_3$ have degree $2(q^2+q+1)$. This means that $Y$ has edges of two types: "black" - the ones correspond to $M_1 \cup M_3$, such an edge lies on $2(q^2+q+1)$ triangles and "white" - those correspond to $M_2$ and each of them lies on $2(q+1)$ triangles. Thus, if $e_b$ and $e_w$ are black and white edges, respectively, then $wt(e_b) = \Theta \cdot wt(e_w)$ where $\Theta =\frac{q^2+q+1}{q+1}$. Each triangle of $Y$ has two black edges and one white and it lies in $q+1$ pyramids (since each edge of $Y_v$ is in $q+1$ triangles). Thus, every vertex of $Y$ lie on $O(q^5)$ triangles.
\end{description}
%\end{itemize}

We can now state the main technical result of this paper.

\begin{thm}\label{thm:isoparametric}
Fix $q \gg 0$ (i.e. $q>q_0=q_0(3)$). Let $F=\F_q((t))$, $\calB = \calB_4(F)$ the $3$-dimensional Bruhat-Tits building associated with $PGL_4(F)$, and $Y=\Gamma \backslash \calB $ a non-partite Ramanujan complex. There exist $\eta_0,\eta_1,\eta_2, \epsilon_0,\epsilon_1,\epsilon_2$ all greater than $0$ such that: If $\alpha \in C^i(Y,\F_2)$ is a locally minimal cochain with $||\alpha|| \leq \eta_i$ then $||\delta_i(\alpha)|| \geq \epsilon_i||\alpha||$. These constants may depend of $q$ but not on $Y$.
\end{thm}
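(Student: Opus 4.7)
The plan is to treat the three dimensions $i=0,1,2$ separately, with increasingly delicate tools. For $i=0$ the theorem reduces to a Cheeger-type expansion estimate for the $1$-skeleton: local minimality is automatic, a $0$-cochain $\alpha$ is the indicator of a subset $A\subseteq Y(0)$, and $\|\alpha\|\le\eta_0$ forces $A$ to occupy a small fraction of the vertices. By Property $(A_3)$ the $1$-skeleton of $Y$ is near-Ramanujan with spectral ratio $|\lambda|/k=O(q^{-2})$, so Cheeger's inequality in the small-set regime yields $\|\delta_0\alpha\|\ge\epsilon_0\|\alpha\|$ with $\epsilon_0$ bounded away from $0$, uniformly in $Y$.

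For $i=1$ and $i=2$ the strategy is a local-to-global argument resting on two structural inputs. First, by $(B_3)$ each link $Y_v$ is the flag complex of $\F_q^4$, namely a spherical building of type $A_3$; by the results of Gromov and Lubotzky--Meshulam--Mozes these are coboundary expanders with a constant $\epsilon^{\mathrm{link}}$ depending only on the type, not on $q$, and moreover $H^j(Y_v,\F_2)=0$ for $j$ below the top dimension. Local minimality of $\alpha$ then implies that at every vertex $v$ either $\alpha_v=0$ or $\alpha_v\notin B^{i-1}(Y_v)$, for a nonzero element of $B^{i-1}$ could never be of minimal norm in its coset (it competes with $0$). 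In the latter case the link coboundary expansion yields $\|\delta_{Y_v}\alpha_v\|_{Y_v}\ge\epsilon^{\mathrm{link}}\|\alpha_v\|_{Y_v}$.

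These pointwise estimates are aggregated using the elementary $\F_2$-identity
\[
(\delta_X\alpha)(\tau)=(\delta_{Y_v}\alpha_v)(\tau\setminus\{v\})+\alpha(\tau\setminus\{v\}),\qquad \tau\in Y(i+1),\ v\in\tau,
\]
which expresses the global coboundary on $\tau$ as the link coboundary at $v$ plus the value of $\alpha$ on the opposite $(i+1)$-subset. Summing $\|\delta_{Y_v}\alpha_v\|$ over $v$ and applying the link bound yields a lower bound proportional to $\sum_v\|\alpha_v\|$, which is a uniform multiple of $\|\alpha\|$. On the other side the identity relates this sum to $\|\delta_X\alpha\|$ plus a \emph{flag term} counting incidences of pairs $(\tau,v)$ with $\alpha(\tau\setminus\{v\})=1$. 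The small-norm hypothesis $\|\alpha\|\le\eta_i$ enters in two ways: it keeps the flag term small in comparison with $\|\alpha\|$, and it forces the set $S=\{v:\alpha_v\ne 0\}$ to occupy a small fraction of $Y(0)$, so that the spectral gap of $Y^{(1)}$ from $(A_3)$ can be invoked on $S$ to absorb the residual cross-terms back into $\|\delta_X\alpha\|$.

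The main obstacle is the case $i=2$. There one must simultaneously control a locally minimal $2$-cochain across links of the two inhomogeneous edge types of $(B_3)$, whose weights differ by the factor $\Theta=(q^2+q+1)/(q+1)$, and tame the $\F_2$-cancellations inherent in the aggregation step. The plan is a layered bootstrap: first bound $\alpha$ on regions of high local density via the link expansion; next propagate through the $1$-skeleton using $(A_3)$ to handle the rest; and finally absorb the cross-term into the left-hand side by choosing $\eta_2$ small enough. All constants must be tracked so that none depend on $|Y|$, which is possible because $\epsilon^{\mathrm{link}}$ depends only on the type $A_3$ and the spectral ratio in $(A_3)$ is $O(q^{-2})$, giving effectively constant expansion on small sets.
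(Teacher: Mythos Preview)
Your outline has the right two ingredients (link coboundary expansion together with the global spectral gap of $Y^{(1)}$), but the way you combine them does not close. Carry out the aggregation concretely in the $2$-dimensional baby case $i=1$: with $t_j=\#\{\tau\in Y(2):|\tau\cap\alpha|=j\}$ one has $t_1+2t_2+3t_3=(q+1)|\alpha|$ and your identity, summed over $v\in\tau$ and then over $\tau$, gives exactly $\sum_v|\delta_{Y_v}\alpha_v|=2t_1+2t_2$. The link bound $\|\delta_{Y_v}\alpha_v\|\ge\epsilon^{\mathrm{link}}\|\alpha_v\|$ translates (in unweighted counts) to $|E_{Y_v}(\alpha_v,\overline{\alpha_v})|\ge \tfrac{\epsilon^{\mathrm{link}}}{2}(q+1)|\alpha_v|$, hence $2t_1+2t_2\ge \epsilon^{\mathrm{link}}(q+1)|\alpha|$. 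Subtracting the degree identity yields only $t_1-3t_3\ge(\epsilon^{\mathrm{link}}-1)(q+1)|\alpha|$, which is \emph{negative} for any $\epsilon^{\mathrm{link}}<1$. So your ``flag term'' $\sum_{(\tau,v)}\mathbb{1}[\alpha(\tau\setminus\{v\})=1]$ is not a small perturbation at all---it equals $(q+1)|\alpha|$ on the nose---and a type-only link constant cannot beat it. The smallness of $\|\alpha\|$ does not make this term small relative to $\|\alpha\|$; it is exactly proportional to $|\alpha|$.

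What the paper actually does is refine the link bound by a thin/thick split: a vertex $v$ is \emph{thin} if $|\alpha_v|<(1-\epsilon)\tfrac{Q}{2}$, and for thin $v$ the Alon--Milman/Cheeger inequality gives a \emph{strict} improvement $|E_{Y_v}(\alpha_v,\overline{\alpha_v})|\ge\tfrac{1+\epsilon}{2}(q+1-\sqrt q)|\alpha_v|$. This yields $t_1-3t_3\ge \tfrac{\epsilon}{2}(q+1-\sqrt q)\,r-\sqrt q\,|\alpha|$ where $r=\sum_{v\ \mathrm{thin}}|\alpha_v|$. The hypothesis $\|\alpha\|\le\eta_1$ is then used \emph{not} to shrink the flag term but, via the near-Ramanujan property $(A)$ of $Y^{(1)}$, to bound the number of edges of $\alpha$ with both endpoints thick, forcing $r\ge\xi|\alpha|$ for a fixed $\xi>0$; for $q$ large this makes $t_1-3t_3$ positive and of order $q|\alpha|$. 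Two further points you should be aware of: first, the argument genuinely needs the link expansion to approach $1$ as $q\to\infty$ (so that the deficit is only $\sqrt q$, not $(1-\epsilon^{\mathrm{link}})(q+1)$), which is stronger than a type-only constant; second, for the $3$-dimensional theorem one must also introduce a notion of thin/thick \emph{edges}, and the paper flags this as the essential new difficulty beyond the $2$-dimensional case---your layered bootstrap does not yet say what that notion is or why it works.
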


For $\eta_0$ we could take $\eta_0 = 1$, i.e., this is true for every $\alpha \in C^0$ (as anyway $||\alpha|| \leq  1$, and if $\alpha$ is locally minimal then even $||\alpha|| \leq \frac{1}{2}$). This is basically saying that the 1-skeleton is an expander. This is not so for $\eta_1$ and $\eta_2$.
In~\cite{KaufmanKazhdanLubotzky}, it is shown that it is possible that for $Y$ in Theorem~\ref{thm:isoparametric}, $H^1(Y,\F_2)$ and $H^2(Y,\F_2)$ be non zero. Hence, Theorem~\ref{thm:isoparametric} is not true without some assumptions such as $||\alpha|| \leq \eta_i$. In particular, $Y$ and even $X=Y^{(2)}$, are in general not coboundary expanders. We are still able to show that $X$ is a cocycle expander and satisfies a linear systolic inequality. Hence, it has the topological overlapping property (due to Theorem~\ref{thm:Gromov-systolic}); see Section~\ref{section:proof-main-thm} below. It should be mentioned, however, that this is the only obstacle, i.e., if $H^1(X,\F_2) (= H^1(Y,\F_2)) =0$ then $X$ is also a coboundary expander.

Now, by a general conjecture of Serre~\cite{Serre} (which has been proven in many cases!) the Cartwright-Steger lattice satisfies the congruence subgroup property. Unfortunately, the conjecture is still open for the Cartwright-Steger lattice, or for all the other arithmetic lattices coming from division algebras. Anyway, if one would assume Serre's conjecture for the Cartwright-Steger lattice, then there are infinitely many $Y$'s as above with $H^1(Y,\F_2) =0$ (see~\cite{KaufmanKazhdanLubotzky} for a proof).

Hence,

\begin{cor}\label{cor:assuming-serre's-conj}
Assuming Serre's conjecture on the congruence subgroup property for the Cartwright-Steger lattices, then for $q \gg 0$ (i.e. $q>q_0=q_0(3)$) , there are infinitely many Ramanujan complexes $Y$, quotients of $\calB_4(F)$, such that $X=Y^{(2)}$ are coboundary expanders.
\end{cor}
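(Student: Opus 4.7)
The plan is to combine three ingredients. (a) Theorem~\ref{thm:isoparametric} can be bootstrapped to global cocycle expansion, i.e.\ $\tilde{\epsilon}_i(X) \geq c > 0$ for $i = 0, 1$---this is the same cocycle expansion (together with a linear systolic inequality) that is invoked in the proof of Theorem~\ref{thm:main-thm} via Theorem~\ref{thm:Gromov-systolic}. (b) Serre's congruence subgroup property for the Cartwright-Steger lattice produces, as proved in \cite{KaufmanKazhdanLubotzky}, infinitely many congruence quotients $Y = \Gamma \backslash \calB$ with $H^1(Y,\F_2) = 0$. (c) Proposition~\ref{prop:cofilling-vs-coboundary-exp}(2) gives $\epsilon_i(X) = \tilde{\epsilon}_i(X)$ whenever $H^i(X,\F_2) = 0$. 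Once (a)--(c) are in place, coboundary expansion is immediate for those $X = Y^{(2)}$ coming from (b), since $X$ and $Y$ share all simplices of dimension $\leq 2$ and hence $H^1(X,\F_2) = H^1(Y,\F_2) = 0$, while $H^0(X,\F_2) = 0$ by connectedness.

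For ingredient (a), given $\alpha \in C^i(X,\F_2) \setminus Z^i$, I would pick a representative $\alpha'$ of minimal norm in the cocycle coset $\alpha + Z^i$. Then $\alpha'$ is also minimal in $\alpha' + B^i$ (since $B^i \subseteq Z^i$, any improvement within $\alpha' + B^i$ would also improve $\alpha'$ within $\alpha + Z^i$), and so by the paper's basic observation $\alpha'$ is locally minimal. If $\|\alpha'\| \leq \eta_i$, Theorem~\ref{thm:isoparametric} yields $\|\delta\alpha\| = \|\delta\alpha'\| \geq \epsilon_i \|\{\alpha\}\|$ directly. The substantive work is in the regime $\|\alpha'\| > \eta_i$: for $i = 0$, the bound follows from the spectral gap of the $1$-skeleton (property $(A_3)$) via Cheeger's inequality; for $i = 1$, a Garland-type averaging over vertex links is natural, using that each link $Y_v$ is the flag complex of $\F_q^4$ (property $(B_3)$) and is itself a coboundary expander, so that a large global $1$-cochain restricts on many vertices to non-trivial $0$-cochains in the links whose coboundaries sum to give the required global lower bound on $\|\delta\alpha\|$.

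For ingredient (b), the strategy of \cite{KaufmanKazhdanLubotzky} uses the congruence subgroup property to identify $H^1(\Gamma,\F_2) \cong \mbox{Hom}(\Gamma,\F_2)$ with a subspace of $\F_2$-characters on the finite congruence quotient $\Gamma_0/\Gamma$; choosing $\Gamma$ to kill all such characters gives infinitely many $\Gamma$ with $H^1(\Gamma,\F_2) = 0$, and contractibility of the building together with freeness of the $\Gamma$-action transfer this vanishing to $H^1(Y,\F_2)$. The principal technical obstacle is ingredient (a)---extending the local isoperimetric inequality of Theorem~\ref{thm:isoparametric} to cochains of arbitrary norm, which is the heart of the analysis of Section~\ref{section:isoparametric-proof} and requires careful combined use of the spectral gap of the $1$-skeleton and the coboundary expansion of the links. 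Ingredient (b), while also non-trivial, is essentially representation-theoretic and is carried out in \cite{KaufmanKazhdanLubotzky}; given (a) and (b), Proposition~\ref{prop:cofilling-vs-coboundary-exp}(2) applied at $i = 0, 1$ completes the proof.
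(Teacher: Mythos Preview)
Your high-level decomposition into ingredients (a), (b), (c) matches the paper's argument exactly, and your treatment of (b) and (c) is fine. The gap is in your proof of ingredient (a), specifically the ``large'' case $\|\{\alpha\}\| > \eta_1$ for $i=1$.

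Your Garland-type sketch does not work as stated. For a locally minimal $\alpha' \in C^1$, summing the link coboundaries gives $\sum_{v} |E_{Y_v}(\alpha'_v,\overline{\alpha'_v})| = 2t_1+2t_2$ (Lemma~\ref{lemma-traingles-counting-in-dim-two}(3)), whereas $|\delta\alpha'| = t_1+t_3$. Passing from the former to the latter is precisely the content of the thin/thick analysis in Section~\ref{section:isoparametric-proof}, and that analysis \emph{requires} the smallness hypothesis $\|\alpha'\|\le\eta_1$ to bound the thick contribution via the spectral gap of $Y^{(1)}$. Without smallness there is no obvious obstruction to having $t_2$ large and $t_1,t_3$ small, so the link sums do not by themselves control $\|\delta\alpha'\|$. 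You have therefore misidentified the hard case as ``substantive work'' when in fact the paper's method makes it trivial.

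The paper's route for (a) is different: it proves the equivalent cofilling bound $\mu_i(X)\le\mu$ and splits on $\|\beta\|$ for $\beta=\delta\alpha\in B^{i+1}$, not on $\|\{\alpha\}\|$. If $\|\beta\|$ is large the bound is free since every cochain has norm $\le 1$. If $\|\beta\|$ is small, the local-minimization Lemma of Section~\ref{section:proof-main-thm} replaces $\beta$ by a locally minimal $\tilde\beta$ with $\beta-\tilde\beta=\delta\gamma$ and $\|\gamma\|\le c\|\beta\|$; then Theorem~\ref{thm:isoparametric} applied at level $i+1$ forces $\tilde\beta=0$ (since $\delta\tilde\beta=0$), so $\beta=\delta\gamma$ with $\|\gamma\|$ controlled. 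Note that this uses the $i=2$ case of Theorem~\ref{thm:isoparametric}, which is exactly why one must work inside the $3$-dimensional complex $Y$ rather than a $2$-dimensional one; your approach only invokes the $i=0,1$ cases and thus does not exploit the extra dimension.
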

%\tnote{say more on what is the Serre's conj?}

We postpond the sketch of the proof of Theorem~\ref{thm:isoparametric} to Section~\ref{section:isoparametric-proof} and show first how the main theorem, Theorem~\ref{thm:main-thm}, can be deduced from Theorem~\ref{thm:isoparametric}.

\section{Proof of the main theorem}\label{section:proof-main-thm}
Recall the notations of the main theorem (Theorem~\ref{thm:main-thm}). We have to show that $X=X_a$ the $2$-skeleton of $Y=Y_a$, the Ramanujan complex of dimension $3$, has the topological overlapping property with a constant that may depend on $q$ but not on $a$. As explained in Section~\ref{section:isoparametric}, every vertex of $Y$ lies on $O(q^4)$ edges and on $O(q^5)$ triangles. So for a fixed $q$, we will get a family of bounded degree topological expanders.

To achieve the goal, we will prove that $X$ satisfies conditions (\ref{item:thm:Gromov-systolic:first}) and (\ref{item:thm:Gromov-systolic:second}) of Theorem~\ref{thm:Gromov-systolic} with constants $\mu$ and $\eta$ independent of $a$ (but may depend on $q$).

\begin{lemma} If $\alpha \in C^i(Y,\F_2)$ then there exists $\tilde{\alpha} \in \alpha + B^i(Y,\F_2)$ such that $\tilde{\alpha}$ is locally minimal, $||\tilde{\alpha}|| \leq ||\alpha||$ and $\alpha -\tilde{\alpha} =\delta_{i-1}(\gamma)$ for some $\gamma \in C^{i-1}(Y,\F_2)$ with $||\gamma|| \leq c||\alpha||$, where the constant $c$ may depend on $q$ but not on $Y$.
\end{lemma}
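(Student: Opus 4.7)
The plan is an iterative descent-to-local-minimum algorithm, processing one vertex at a time, with careful bookkeeping of the accumulated coboundary displacement. I would build a sequence $\alpha=\alpha^{(0)},\alpha^{(1)},\ldots$ in $\alpha+B^i(Y,\F_2)$ together with cochains $\gamma^{(k)}\in C^{i-1}(Y,\F_2)$ satisfying $\alpha-\alpha^{(k)}=\delta_{i-1}\gamma^{(k)}$. If $\alpha^{(k)}$ is locally minimal, stop and set $\tilde\alpha:=\alpha^{(k)}$, $\gamma:=\gamma^{(k)}$. Otherwise pick a vertex $v_k$ at which $\alpha^{(k)}_{v_k}$ fails to be minimal in its coset modulo $B^{i-1}(Y_{v_k},\F_2)$, choose $\beta_k\in C^{i-2}(Y_{v_k},\F_2)$ realising a local decrease and of minimal norm among such, and lift it to $\gamma_k\in C^{i-1}(Y,\F_2)$ supported on faces containing $v_k$ by $\gamma_k(\tau):=\beta_k(\tau\setminus\{v_k\})$ when $v_k\in\tau$, and $\gamma_k(\tau):=0$ otherwise. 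A direct computation shows $(\delta_Y\gamma_k)(\sigma)=(\delta_{Y_{v_k}}\beta_k)(\sigma\setminus\{v_k\})$ when $v_k\in\sigma$ and $0$ otherwise, so $\alpha^{(k+1)}:=\alpha^{(k)}+\delta_Y\gamma_k$ has strictly smaller norm. Since norms take values in a discrete set and decrease strictly, the procedure terminates, the limiting $\tilde\alpha$ is locally minimal, and $\|\tilde\alpha\|\le\|\alpha\|$.

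The norm control on $\gamma=\sum_k\gamma_k$ rests on the weight proportionality between $Y$ and its links: for $\sigma\in Y(i)$ containing $v$, one has $wt_Y(\sigma)=R_q\,|Y(d)|^{-1}wt_{Y_v}(\sigma\setminus\{v\})$ for a constant $R_q$ depending only on $q$, because every $Y_v$ is isomorphic to the flag complex of $\F_q^{d+1}$ and $|Y_v(d-1)|$ is a fixed $q$-dependent number. Hence every norm relation inside the fixed finite complex $Y_v$ translates into an analogous norm relation in $Y$ with a $q$-dependent, $Y$-independent proportionality factor. Because the link cofilling constant $\mu_{i-2}(Y_v)\le M_q$ is bounded uniformly, choosing $\beta_k$ of minimal norm yields $\|\gamma_k\|_Y\le C_q\|\delta_Y\gamma_k\|_Y$ with $C_q$ depending only on $q$.

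The main technical point, which I expect to be the hardest step, is assembling these per-iteration estimates into the global bound $\|\gamma\|_Y\le c\|\alpha\|_Y$. The crude bound $\|\gamma_k\|_Y\le 2C_q\|\alpha^{(k)}\|_Y|_{\mathrm{star}(v_k)}$ is not directly summable in $k$. The remedy is to run the descent in atomic steps, choosing $\beta_k$ so that $\|\delta\beta_k\|_{Y_{v_k}}$ is comparable, up to a $q$-dependent factor, to the strict norm decrease $\|\alpha^{(k)}_{v_k}\|_{Y_{v_k}}-\|\alpha^{(k+1)}_{v_k}\|_{Y_{v_k}}$; this exploits the specific combinatorics of the flag complex of $\F_q^{d+1}$ and in particular its strictly positive coboundary expansion constants. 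One then obtains $\|\gamma_k\|_Y\le C'_q\bigl(\|\alpha^{(k)}\|_Y-\|\alpha^{(k+1)}\|_Y\bigr)$. The triangle inequality $\|\gamma\|_Y\le\sum_k\|\gamma_k\|_Y$ together with the telescoping $\sum_k(\|\alpha^{(k)}\|_Y-\|\alpha^{(k+1)}\|_Y)\le\|\alpha\|_Y$ gives the required bound with $c=C'_q$ depending only on $q$.
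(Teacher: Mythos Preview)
Your iterative descent procedure---pick a vertex where local minimality fails, correct there by lifting a link cochain, repeat until no such vertex exists---is exactly the paper's argument, and your verification that the lift satisfies $(\delta_Y\gamma_k)(\sigma)=(\delta_{Y_{v_k}}\beta_k)(\sigma\setminus\{v_k\})$ on faces through $v_k$ and vanishes elsewhere is correct.

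Where you diverge is in the bookkeeping for $\|\gamma\|$. You invoke the link cofilling constant and then propose a delicate refinement to ``atomic steps'' exploiting the coboundary expansion of the flag complex, all in order to obtain the telescoping bound $\|\gamma_k\|_Y\le C'_q\bigl(\|\alpha^{(k)}\|_Y-\|\alpha^{(k+1)}\|_Y\bigr)$. This works, but it is considerably more than needed. The paper uses only two elementary observations: (i) the norm on $C^i(Y,\F_2)$ takes values in $\{m/\bigl(\binom{d+1}{i+1}|X(d)|\bigr):m\in\Z_{\ge 0}\}$, so the number of iterations is at most $\binom{d+1}{i+1}|X(d)|\cdot\|\alpha\|$; and (ii) each $\gamma_k$ is supported in the star of a single vertex, whose size is bounded by a constant $c_1(q)$ depending only on $q$. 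Multiplying gives $|\gamma|\le c_2(q)|\alpha|$ directly, hence $\|\gamma\|\le c\|\alpha\|$, with no appeal to link expansion or to any particular choice of $\beta_k$. Indeed your telescoping inequality is already an immediate consequence of (i) and (ii): $\|\gamma_k\|_Y\le c_1(q)\cdot\max_\tau wt_Y(\tau)\le c(q)/|X(d)|$, while any strict decrease satisfies $\|\alpha^{(k)}\|_Y-\|\alpha^{(k+1)}\|_Y\ge 1/\bigl(\binom{d+1}{i+1}|X(d)|\bigr)$. So the link cofilling constant and the ``specific combinatorics of the flag complex'' play no role here.
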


\begin{proof} If $\alpha$ is locally minimal, there is nothing to prove. If it is not, then at some vertex $v$, $\alpha_v$ is not minimal, i.e., there exists $\gamma_v \in C^{i-2}(Y_v,\F_2)$ such that $||\alpha_v +\delta_{i-2}(\gamma_v)|| < ||\alpha_v||$.
Let $\tilde{\gamma}$ be the $(i-1)$-cochain of $Y$ with $\tilde{\gamma}$ equal to $\gamma_v$ on all the $(i-1)$-cells containing $v$ (i.e., $\tilde{\gamma_v} = \gamma_v$) and zero elsewhere, so this $\tilde{\gamma}$ has at most $c_1(q)$ $(i-1)$-cochains. Then $\alpha' = \alpha + \delta_{i-1}(\tilde{\gamma})$ satisfies $||\alpha'|| < ||\alpha||$. If $\alpha'$ is locally minimal we stop and take $\tilde{\alpha} = \alpha'$. If not, we continue the process. Let us note that when we "correct" $\alpha$ at a vertex $v$, we may destroy it in a neighboring vertex $w$. Still, the process terminates since each time the new $\alpha$ has a strictly smaller norm than the norm of the previous $\alpha$. The number of possible values of the norm in the process is at most ${d+1 \choose i+1}|X(d)|||\alpha||$.  Now, each "move" change $\alpha$ by $\delta_{i-1}(\tilde{\gamma})$ when $\tilde{\gamma}$ has a bounded support. So altogether, $\tilde{\alpha} - \alpha =\delta_{i-1}(\gamma)$ when $|\gamma| \leq c_2(q)|\alpha|$. Thus, $||\gamma|| \leq c_3 ||\alpha||$, for some constant depending only on $q$.
\end{proof}

We need to show that $X$ of Theorem~\ref{thm:main-thm} satisfies Conditions (\ref{item:thm:Gromov-systolic:first}) and (\ref{item:thm:Gromov-systolic:second}) of Theorem~\ref{thm:Gromov-systolic}. Let $\alpha \in C^i(X,\F_2) = C^i(Y,\F_2)$. Assume $||\alpha|| < \eta_i$, where $\eta_i$ is from Theorem~\ref{thm:isoparametric}, then $||\tilde{\alpha}|| < \eta_i$, and by this theorem, $||\delta_i(\tilde{\alpha})|| \geq \epsilon_i||\tilde{\alpha}||$. Now, if $\alpha$ was in $Z^i(Y,\F_2)$ to start with, then so is $\tilde{\alpha}$ , hence $\delta_i(\tilde{\alpha}) =0$. This is a contradiction unless $\tilde{\alpha} = 0$, so, $\tilde{\alpha} \in B^i$, and therefore also $\alpha \in B^i$ , i.e., $\alpha$ is a trivial cocycle. This shows that for all non-trivial cocycles $\alpha$ of $Z^i$, $||\alpha|| \geq \eta_i$. Part (\ref{item:thm:Gromov-systolic:second}) of Theorem~\ref{thm:Gromov-systolic} is proven. In fact we prove along the way also a linear systolic lower bound for every $\alpha \in Z^2(Y,F_2)$, even though, this is not needed for Theorem~\ref{thm:Gromov-systolic}.

To prove part (\ref{item:thm:Gromov-systolic:first}) we argue as follows: Let $\beta \in B^{i+1}(X,\F_2) =B^{i+1}(Y,\F_2) $ ($i=0,1$). We have to show that there exists $\alpha \in C^i$ with $\delta_i(\alpha) = \beta$ and $||\alpha|| \leq \mu||\beta||$ for some $\mu > 0$. Arguing as before we can replace $\beta$ by a locally minimal cochain $\tilde{\beta}$ which is still in $B^{i+1}$,  $||\tilde{\beta}|| \leq ||\beta||$  and $\tilde{\beta} - \beta = \delta_i(\gamma)$ with $||\gamma|| \leq c_1 ||\beta||$ for some constant $c_1$, when $c_1$ depends neither on $Y$ nor on $\beta$. Note now that for every $\alpha \in C^i$, $||\alpha|| \leq 1$
Now, if $||\beta|| > \eta_i$, then there is nothing to prove, as we can take $\mu > \frac{1}{\eta_i}$. So assume $||\beta|| \leq \eta_i$; in this case $||\tilde{\beta}|| \leq \eta_i$ and hence by Theorem~\ref{thm:isoparametric}, $||\delta_{i+1}(\tilde{\beta})|| \geq \epsilon_i||\tilde{\beta}||$. But, $\tilde{\beta} \in B^{i+1}$, hence $\delta_{i+1}(\tilde{\beta}) =0$. This, implies that $\tilde{\beta} =0$. We saw that $\tilde{\beta} - \beta = \delta_i(\gamma)$ with $||\gamma|| \leq c_1||\beta||$, so now (\ref{item:thm:Gromov-systolic:first}) of Theorem~\ref{thm:Gromov-systolic} is also verified. Theorem~\ref{thm:main-thm} now follows.

\section{Isoperimetric inequalities - the $2$-dimensional case}\label{section:isoparametric-proof}
What is left in order to deduce the main theorem is to prove Theorem~\ref{thm:isoparametric} - the isoperimetric inequalities for the $3$-dimensional Ramanujan complexes. These proofs are somewhat long, technical and complicated. We keep these proofs for the full version of the paper~\cite{KaufmanKazhdanLubotzky}. What we give here is a complete proof of a baby version of Theorem~\ref{thm:isoparametric}, namely, the isoperimetric inequalities for Ramanujan complexes of dimension $2$. This will illustrate the main idea of the proof of Theorem~\ref{thm:isoparametric}, saving a lot of the technical difficulties. At the end of the proof for the $2$-dimensional case, we will explain briefly the challenges arising in dimension $3$. So here we prove the following:

\begin{thm}\label{thm:isoparametric-dim-2}
Let $q \gg 0$ (i.e. $q>q_0=q_0(2)$), $F=\F_q((t))$, $\calB=\calB_3(F)$ the $2$-dimensional Bruhat-Tits building associated with $PGL_3(F)$ and $Y$ a non-partite Ramanujan quotient of $\calB$. Then there exist $\eta_0,\eta_1,\epsilon_0,\epsilon_1$, all greater than zero such that:
For $\alpha \in C^i(Y,\F_2)$ a locally minimal cochain with $||\alpha|| \leq \eta_i$, $||\delta_i(\alpha)|| \geq \epsilon_i||\alpha||$.
\end{thm}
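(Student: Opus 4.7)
My plan is to split the proof according to the value of $i$. The case $i=0$ is essentially a Cheeger-type statement on the Ramanujan $1$-skeleton; the case $i=1$ is a local-to-global argument driven by link expansion, and it is the one that requires the small-norm hypothesis.

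\textbf{Case $i=0$.} For a locally minimal $\alpha\in C^0(Y,\F_2)$ one has $||\alpha||\le 1/2$ (since $B^0(Y,\F_2)=\{0,\mathbf 1\}$, so minimality modulo $B^0$ forces the smaller of $||\alpha||$ and $1-||\alpha||$). Thus $\alpha$ is the indicator of some $A\subseteq Y(0)$ with $|A|\le |Y(0)|/2$. Property (A) applied with $d=2$ says that $Y^{(1)}$ is $k$-regular with $k=2(q^2+q+1)$ and all non-trivial eigenvalues of size $O(\sqrt k)$, so the discrete Cheeger inequality gives $||\delta_0\alpha||/||\alpha||\ge 1-|\lambda_2|/k\ge 1-O(1/\sqrt k)$. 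One may therefore take $\eta_0=1/2$ and $\epsilon_0$ as close to $1$ as desired once $q$ is large.

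\textbf{Case $i=1$.} Property (B) identifies the link $Y_v$ with the flag complex of $\F_q^3$, i.e.\ the bipartite point--line incidence graph of $PG(2,q)$, which is $(q+1)$-regular on $2(q^2+q+1)$ vertices with second singular value $\sqrt q$; Cheeger gives a universal lower bound $\epsilon_L>0$ on its $0$-coboundary expansion (for $q$ large). Local minimality of $\alpha\in C^1(Y,\F_2)$ gives $||\alpha_v||_v\le 1/2$ for each $v$, so either $\alpha_v=0$ or $\alpha_v\notin B^0(Y_v)=\{0,\mathbf 1\}$; in the latter case link expansion yields $||\delta_{Y_v}\alpha_v||_v\ge \epsilon_L\,||\alpha_v||_v$. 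I then weight each vertex by $wt(v)=1/|Y(0)|$ and sum. A short combinatorial check at each triangle $T$---observing that the number of $v\in T$ at which ``exactly one of the two edges of $T$ at $v$ lies in $\alpha$'' equals $2$ when $T$ has $1$ or $2$ edges in $\mathrm{supp}(\alpha)$ and equals $0$ otherwise---yields the identities
\[
\sum_v wt(v)\,||\alpha_v||_v=||\alpha|, \qquad \sum_v wt(v)\,||\delta_{Y_v}\alpha_v||_v=\tfrac{2}{3}\bigl(||\mathcal T_1||+||\mathcal T_2||\bigr),
\]
where $\mathcal T_k$ is the weighted set of triangles with exactly $k$ edges in $\mathrm{supp}(\alpha)$. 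Combining with the link inequality gives the fundamental inequality $||\mathcal T_1||+||\mathcal T_2||\ge \frac{3\epsilon_L}{2}||\alpha||$.

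Since $||\delta_1\alpha||=||\mathcal T_1||+||\mathcal T_3||\ge ||\mathcal T_1||$, the remaining task is to upper-bound $||\mathcal T_2||$, and this is the only place where the hypothesis $||\alpha||\le\eta_1$ is used. Each $T\in\mathcal T_2$ has a unique ``apex'' vertex $v$ at which its two $\alpha$-edges meet, so $||\mathcal T_2||$ is proportional to $\sum_v |E_{Y_v}(S_v)|$, where $S_v=\mathrm{supp}(\alpha_v)\subseteq Y_v(0)$. The Alon--Chung expander-mixing bound on the Ramanujan bipartite link reads
\[
|E_{Y_v}(S_v)|\le \frac{(q+1)\,|S_v|^2}{2(q^2+q+1)}+\sqrt q\,|S_v|.
\]
The linear part contributes only $O(1/\sqrt q)\,||\alpha||$ after summation, absorbed by taking $q$ large. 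The hardest step is the quadratic part: local minimality alone gives only the trivial ceiling $|S_v|\le q^2+q+1$, which makes the quadratic term of order $||\alpha||$ and hence useless. My plan is to beat it down using the near-Ramanujan spectrum of $Y^{(1)}$ itself---a second application of expander-mixing, this time to the $1$-skeleton, should forbid the degree sequence $\{|S_v|\}_v$ of the edge set $\mathrm{supp}(\alpha)$ from concentrating on a small set of vertices when $||\alpha||$ is small, forcing most $|S_v|$ to lie far below the local-minimality ceiling. Quantitatively this should yield $\sum_v |S_v|^2\le C\eta_1(q^2+q+1)|\alpha|$, whence $||\mathcal T_2||\le (C'\eta_1+C''/\sqrt q)\,||\alpha||$. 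Choosing $\eta_1$ small and $q$ large enough that $C'\eta_1+C''/\sqrt q<\frac{3\epsilon_L}{4}$ completes the proof with some positive absolute $\epsilon_1$. The main obstacle is precisely this ``two Ramanujans'' argument---combining link expansion (to bound $|E_{Y_v}(S_v)|$ in terms of $|S_v|^2$) with $1$-skeleton expansion (to rule out concentration of the $|S_v|$'s)---and it is the only ingredient that goes beyond the elementary link calculation.
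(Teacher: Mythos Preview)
Your two-scale strategy (link expansion locally, then the near-Ramanujan $1$-skeleton globally) is exactly the paper's, and your identities $\sum_v wt(v)\|\alpha_v\|_v=\|\alpha\|$ and $\sum_v wt(v)\|\delta_{Y_v}\alpha_v\|_v=\tfrac{2}{3}(\|\mathcal T_1\|+\|\mathcal T_2\|)$ are correct. Two small points first. The sum $\sum_v|E_{Y_v}(S_v)|$ equals $t_2+3t_3$, not $t_2$: a triangle with all three edges in $\alpha$ is counted at every vertex. This is harmless for an upper bound on $t_2$. More importantly, your Alon--Chung constant is twice too large: from $\lambda_1(Y_v)=q+1-\sqrt q$ one has $|E_{Y_v}(S_v)|\le \tfrac{(q+1)|S_v|^2}{4(q^2+q+1)}+\tfrac{\sqrt q}{2}|S_v|$. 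With your stated constant the quadratic term alone, even at the trivial ceiling $\sum_v|S_v|^2\le Q|\alpha|$, would already be $\approx (q+1)|\alpha|$, twice your lower bound $t_1+t_2\ge\tfrac12(q+1-\sqrt q)|\alpha|$, and the subtraction would be hopeless.

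The genuine gap is the claim $\sum_v|S_v|^2\le C\eta_1(q^2+q+1)|\alpha|$. Expander mixing on $Y^{(1)}$ does not force the degrees $|S_v|$ down that far: if the nonzero $|S_v|$ are all equal to some $c$ and $\alpha$ is supported inside that vertex set $S$, then $|\alpha|\le|E_{Y^{(1)}}(S)|$ only gives $c\lesssim\sqrt{\eta_1}\,Q+O(q)$, hence at best $\sum_v|S_v|^2\lesssim\sqrt{\eta_1}\,Q|\alpha|$. What your argument actually needs (with the corrected link constant) is merely $\sum_v|S_v|^2\le(1-\delta)Q|\alpha|$ for some fixed $\delta>0$, and proving \emph{that} already requires a thin/thick dichotomy. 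The paper gets this saving more directly, bypassing $\sum_v|S_v|^2$ altogether: call $v$ \emph{thin} if $|S_v|<(1-\epsilon)\tfrac{Q}{2}$, use the sharper Alon--Milman form $|E_{Y_v}(S_v,\overline{S_v})|\ge\tfrac{|S_v|(Q-|S_v|)}{Q}(q+1-\sqrt q)$ to gain a factor $(1+\epsilon)$ at every thin vertex, and subtract the exact identity $t_1+2t_2+3t_3=(q+1)|\alpha|$ to obtain $t_1-3t_3\ge\tfrac{\epsilon}{2}(q+1-\sqrt q)\,r-\sqrt q|\alpha|$, where $r=\sum_{\text{thin }v}|S_v|$. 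The global step is then applied to the \emph{thick} set $S$: since $|S|\le\tfrac{4|\alpha|}{(1-\epsilon)Q}$, mixing on $Y^{(1)}$ gives $|E_{Y^{(1)}}(S)|<|\alpha|$ once $\|\alpha\|<\tfrac{1}{4(1+\epsilon')}$, so a positive fraction of edges of $\alpha$ have a thin endpoint, $r\ge\xi|\alpha|$, and $t_1\ge c\,q|\alpha|$. Note this produces the explicit threshold $\eta_1$ arbitrarily close to $\tfrac14$, whereas your route (even after repair) needs $\eta_1$ small; the large threshold is what yields the near-sharp systolic constant in the corollary that follows.
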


{\bf Remark:} We will see below that, in fact, for $i=0$, we do not need the assumption $||\alpha|| \leq \eta_0$ (or we can take $\eta_0$ =1 as in this case $||\alpha||$ is always at most $\frac{1}{2}$). If we would prove Theorem~\ref{thm:isoparametric-dim-2} also for $i=1$ without the assumption $||\alpha|| \leq \eta_1$, then it would follow that $Y$ itself is a coboundary expander and hence also a topological expander. But, as shown in~\cite{KaufmanKazhdanLubotzky}, for some $Y$'s, $H^1(Y,\F_2) \neq 0$. This implies that in general Theorem~\ref{thm:isoparametric-dim-2} is not true without the assumption that $||\alpha|| \leq \eta_1$. (Indeed, take $\alpha$ to be a locally minimal representative of a non-trivial cocycle in $Z^1(Y,\F_2)$, then $\delta_1(\alpha) = 0$). It is still very much plausible that $Y$ is a topological expander, but this is still open. This is the reason that we had to move to the $2$-skeleton of the $3$-dimensional Ramanujan complex.

We now prove Theorem~\ref{thm:isoparametric-dim-2}. We recall the properties of $Y$ we are using:
\begin{description}
\item[($A_2$)]  The $1$-skeleton $Y^{(1)}$ of $Y$ is a $k$-regular graph of degree $Q={3 \choose 1}_q + {3 \choose 2}_q  = 2(q^2+q+1)$, and every eigenvalue $\lambda$ of $A_{Y^{(1)}}$ is either $Q$ or $|\lambda| \leq 6q$.
\item[($B_2$)] The link $Y_v$ of a vertex is the flag complex of $\F_q^3$. This is the bipartite graph with $Q=2(q^2+q+1)$ points of degree $q+1$ of the "lines versus points" of the projective plane $\P^2(\F_q)$, so every edge of $Y$ lies in $q+1$ triangles. The non trivial eigenvalues of $A_{Y_v}$ are $\pm\sqrt{q}$.
\end{description}

The proof of the first part of the theorem, i.e., $i=0$, is the same as the (by now standard) argument that quotients of a group with property $T$ are expanders. We skip this part. Again, for full details see~\cite{KaufmanKazhdanLubotzky}. We concentrate on $i=1$, which is the main novelty of the current paper. To this end, fix $\epsilon' > 0$ and assume $\alpha \in C^1(Y,\F_2)$, with $||\alpha|| \leq \frac{1}{4(1+\epsilon')}$, i.e., $|\alpha| \leq \frac{Qn}{8(1+\epsilon')}$, when $n=|Y(0)|$.

We first recall some properties of (one dimensional) expander graphs that we will be using.
Let $X=(V,E)$ be a finite connected graph, $A=A_X$ its adjacency matrix and $\Delta$ its laplacian, i.e., $\Delta:L^2(X) \rightarrow L^2(X)$ defined by
$\Delta(f)(v)=deg(v)f(v)-\sum_{y \thicksim v}f(y)$ where the sum is over the neighbors of $v$. If $X$ is $k$-regular then $\Delta=kI-A$.
It is well known that the eigenvalues of $\Delta$ (and $A$) are intimately connected with the expansion properties of $X$. We will use the following result due to Alon and Milman~\cite[Prop 4.2.5]{LubotzkyBook}.

\begin{proposition}~\label{prop-cheeger} Let $\lambda=\lambda_1(X)$ be the smallest positive eigenvalue of $\Delta$.
\begin{enumerate}
\item For every subset $W \subseteq V$,
$$ |E(W,\bar{W})| \geq \frac{|W||\bar{W}|}{|V|} \lambda_1(X),$$ where $E(W,\bar{W})$ denotes the set of edges from $W$ to its complement $\bar{W}$.

\item The Cheeger constant $h(X)$ satisfies:
$$h(X):=\mbox{min}_{W \subseteq V} \frac{|E(W,\bar{W})|}{\mbox{min}(|W|,|\bar{W}|)} \geq \frac{\lambda_1(X)}{2}.$$

\item If $X$ is $k$-regular then $E(W):=E(W,W)$ satisfies:
$$ E(W)=\frac{1}{2}(k|W|-E(W,\bar{W})) \leq \frac{1}{2}(k-\frac{\bar{W}}{|V|}\lambda_1(X))|W|.$$
\end{enumerate}
\end{proposition}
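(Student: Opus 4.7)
The three parts are the standard discrete Cheeger-type inequalities relating the spectral gap of the Laplacian to edge expansion; I would derive them all from a single Rayleigh-quotient computation applied to a suitable test function.

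The plan for part (1) is to use the indicator $\chi_W \in L^2(X)$ and project it onto the orthogonal complement of the constant function $\one$ (the $0$-eigenspace of $\Delta$). Set $f := \chi_W - \frac{|W|}{|V|}\one$, which is orthogonal to $\one$, so $\langle \Delta f, f\rangle \geq \lambda_1(X) \|f\|^2$ by the min-max characterization of $\lambda_1$. Two routine calculations finish it. On the norm side, $\|f\|^2 = |W|(1 - |W|/|V|)^2 + |\bar{W}|(|W|/|V|)^2 = |W||\bar{W}|/|V|$. On the Dirichlet side, using the identity $\langle \Delta g, g\rangle = \sum_{\{u,v\}\in E}(g(u)-g(v))^2$, I would observe that $f(u)-f(v)$ is zero on edges inside $W$ or inside $\bar W$, and is $\pm 1$ on edges across the cut, so $\langle \Delta f, f\rangle = |E(W,\bar W)|$. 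Combining these gives $|E(W,\bar W)| \geq \lambda_1(X)\cdot |W||\bar W|/|V|$.

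Parts (2) and (3) are then purely algebraic consequences. For (2), divide the inequality from (1) by $\min(|W|,|\bar W|)$ to get $|E(W,\bar W)|/\min(|W|,|\bar W|) \geq \lambda_1(X)\cdot \max(|W|,|\bar W|)/|V| \geq \lambda_1(X)/2$, and take the minimum over $W$. For (3), in a $k$-regular graph the handshake relation $\sum_{v\in W}\deg(v) = 2E(W)+|E(W,\bar W)|$ gives $E(W) = \tfrac{1}{2}(k|W|-|E(W,\bar W)|)$; substituting the lower bound from (1) on $|E(W,\bar W)|$ yields exactly $E(W)\leq \tfrac{1}{2}\bigl(k - \tfrac{|\bar W|}{|V|}\lambda_1(X)\bigr)|W|$.

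There is essentially no single hard step here — the whole argument reduces to the Courant–Fischer inequality for $\lambda_1$ applied to one carefully chosen test function, plus elementary counting. The only point that requires a little care is the Dirichlet identity $\langle \Delta g,g\rangle = \sum_{\{u,v\}\in E}(g(u)-g(v))^2$, which I would verify directly from the definition $\Delta(g)(v) = \deg(v)g(v)-\sum_{y\sim v}g(y)$ by expanding $\sum_v g(v)\Delta g(v)$ and regrouping the sum over oriented edges; this holds for every graph (regular or not), so part (1) and hence part (2) apply in full generality, while only part (3) uses $k$-regularity through the handshake identity.
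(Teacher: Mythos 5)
Your proof is correct, and it is essentially the standard argument: the paper itself gives no proof of this proposition but cites it as Alon--Milman (Prop.\ 4.2.5 of \cite{LubotzkyBook}), where the same Rayleigh-quotient computation with the test function $\chi_W - \frac{|W|}{|V|}\one$ is the underlying proof. The only point worth making explicit is that the bound $\langle \Delta f, f\rangle \geq \lambda_1(X)\|f\|^2$ for $f \perp \one$ uses the connectedness of $X$ (so that the kernel of $\Delta$ consists only of the constants), which is indeed part of the paper's standing hypothesis on $X$.
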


\begin{lemma}~\label{lemma-traingles-counting-in-dim-two}
For $i=0,1,2,3$ denote by $t_i$, the number of triangles of $Y$ which contain exactly $i$ edges from $\alpha$. Then,
\begin{enumerate}
\item~\label{item-one-lemma-traingles-counting-in-dim-two} $t_1+2t_2+3t_3=(q+1)|\alpha|$.
\item~\label{item-two-lemma-traingles-counting-in-dim-two} $|\delta_1(\alpha)| = t_1+t_3$.
\item~\label{item-three-lemma-traingles-counting-in-dim-two} $\sum_{v \in Y(0)}|E_{Y_v}(\alpha_v, \overline{\alpha_v})|=2t_1+2t_2$.
\end{enumerate}
Here we consider $\alpha_v$, which is the set of edges in $\alpha$ touching $v$, as a set of vertices of the link $Y_v$. By $\overline{\alpha_v}$ we denote its complement there and $E_{Y_v}(\alpha_v, \overline{\alpha_v})$ is the set of edges from $\alpha_v$ to $\overline{\alpha_v}$.
\end{lemma}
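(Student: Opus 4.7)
All three identities are elementary double-counting statements about incidences between the edges of $Y$ lying in $\alpha$ and the triangles of $Y$. My plan is to treat each part separately by counting a suitable set of pairs in two different ways. The three parts will use, respectively, the regularity of edge-triangle incidences coming from property $(B_2)$, the definition of the coboundary map $\delta_1$, and a local analysis in the links.

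\textbf{Part (\ref{item-one-lemma-traingles-counting-in-dim-two}).} I will count, in two ways, the set
$$S_1 := \{(e,T) \mid e \in \alpha,\ T \in Y(2),\ e \subset T\}.$$
Property~($B_2$) says that every edge of $Y$ lies on exactly $q+1$ triangles (since each edge of the link $Y_v$ of any endpoint $v$ belongs to $q+1$ edges of $Y_v$). Summing over $e \in \alpha$ yields $|S_1| = (q+1)|\alpha|$. Grouping instead by $T$, a triangle containing exactly $i$ edges of $\alpha$ contributes $i$ elements to $S_1$, giving $|S_1| = t_1 + 2t_2 + 3t_3$.

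\textbf{Part (\ref{item-two-lemma-traingles-counting-in-dim-two}).} This is immediate from the definition of $\delta_1$. Over $\F_2$, $\delta_1(\alpha)(T) = \sum_{e \subset T}\alpha(e)$ equals $1$ iff $T$ contains an odd number of $\alpha$-edges. The support of $\delta_1(\alpha)$ is therefore the set of triangles with exactly one or exactly three $\alpha$-edges, and so $|\delta_1(\alpha)| = t_1 + t_3$.

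\textbf{Part (\ref{item-three-lemma-traingles-counting-in-dim-two}).} I would argue locally. Identify the vertices of the link $Y_v$ with the edges of $Y$ through $v$, and the edges of $Y_v$ with the triangles of $Y$ through $v$. An edge of $Y_v$ corresponding to a triangle $T \ni v$ lies in $E_{Y_v}(\alpha_v,\overline{\alpha_v})$ iff exactly one of the two edges of $T$ incident to $v$ belongs to $\alpha$. Thus the sum $\sum_{v \in Y(0)}|E_{Y_v}(\alpha_v,\overline{\alpha_v})|$ equals
$$\sum_{T \in Y(2)} \#\{v \in T : \text{exactly one of the two edges of }T\text{ at }v\text{ is in }\alpha\}.$$
A quick case check on a triangle $T$ with vertices $v_1,v_2,v_3$ and opposite edges $e_1,e_2,e_3$ shows that this local count is $0$ when $T$ has $0$ or $3$ edges in $\alpha$, and equals $2$ when $T$ has $1$ or $2$ edges in $\alpha$ (for instance, if only $e_1 \in \alpha$, then $v_1$ contributes $0$ while $v_2,v_3$ each contribute $1$). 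Summing over triangles gives $2t_1 + 2t_2$.

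\textbf{Obstacles.} I do not foresee a genuine obstacle: all three parts reduce to bookkeeping once the link structure and the definition of $\delta_1$ are in hand. The only care needed is in Part (\ref{item-three-lemma-traingles-counting-in-dim-two}), where one must keep straight the identification between edges of $Y_v$ and triangles of $Y$ through $v$ in order to carry out the triangle-by-triangle case analysis correctly.
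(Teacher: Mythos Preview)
Your proposal is correct and follows essentially the same argument as the paper: both use the edge--triangle regularity from $(B_2)$ for part~(\ref{item-one-lemma-traingles-counting-in-dim-two}), the definition of $\delta_1$ over $\F_2$ for part~(\ref{item-two-lemma-traingles-counting-in-dim-two}), and the triangle-by-triangle case analysis in the links for part~(\ref{item-three-lemma-traingles-counting-in-dim-two}). One small wording slip: in part~(\ref{item-one-lemma-traingles-counting-in-dim-two}) the relevant fact from $(B_2)$ is that each \emph{vertex} of $Y_v$ has degree $q+1$ (equivalently, each edge of $Y$ through $v$ lies in $q+1$ triangles), not that each edge of $Y_v$ ``belongs to $q+1$ edges.''
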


\begin{proof} For (\ref{item-one-lemma-traingles-counting-in-dim-two}) we recall that every edge lies on $q+1$ triangles and a triangle which contributes to $t_i$ contains $i$ edges from $\alpha$. Part (\ref{item-two-lemma-traingles-counting-in-dim-two}) is simply the definition of $\delta_1(\alpha)$, which is the set of all triangles containing an odd number of edges from $\alpha$. For (\ref{item-three-lemma-traingles-counting-in-dim-two}) we argue as follows.

If $\triangle =\{v_0,v_1,v_2\}$ is a triangle of $Y$, then it contributes an edge at $Y_{v_k}$ ($\{ v_k\} = \{v_i,v_j,v_k\}\backslash \{v_i,v_j\} )$. This is the edge between $e_{i,k}=(v_i,v_k)$ and $e_{j,k}=(v_j,v_k)$ when we consider $e_{i,k}$ and $e_{j,k}$ as vertices of $Y_{v_k}$. This edge will be in $E_{Y_{v_k}}(\alpha_{v_k}, \overline{\alpha_{v_k}})$ if and only if exactly one of $\{e_{i,k},e_{j,k}\}$ is in $\alpha$. A case by case analysis of the four possibilities shows that if $\triangle$ has either $0$ or $3$ edges from $\alpha$ then $\triangle$ does not contribute anything to the left hand sum. On the other hand, if it has either $1$ or $2$ edges, it contributes $2$ to the sum. This proves the lemma.
\end{proof}

Fix now a small $\epsilon>0$ to be determined later and define:

\begin{definition} A vertex $v$ of $Y$ is called {\em thin} with respect to $\alpha$ if $|\alpha_v| < (1-\epsilon)\frac{Q}{2}$ and {\em thick} otherwise (note that by our local minimality assumption, $|\alpha_v| \leq \frac{Q}{2}$ for every $v$). Denote
$W=\{v \in V=Y(0) | \mbox{ } \exists e \in \alpha \mbox{ with } v \in e\}$, $R = \{v \in W | \mbox{ } v \mbox{ thin} \}$, $S = \{v \in W | \mbox{ } v \mbox{ thick} \}$.% = W \setminus R $.
\end{definition}
Let $r=\sum_{v \in R}|\alpha_v|$ and $s=\sum_{v \in S}|\alpha_v|$. As every edge in $\alpha$ contributes $2$ to $r+s$ we get the following:

%Denote
%\begin{itemize}
%\item $W=\{v \in V=Y(0) | \mbox{ } \exists e \in \alpha \mbox{ with } v \in e\}$.
%\item $R = \{v \in W | \mbox{ } v \mbox{ thin} \}$.
%\item $S = \{v \in W | \mbox{ } v \mbox{ thick} \} = W \setminus R $.
%\end{itemize}

\begin{lemma}~\label{lemma-$r+s$}
$r+s = 2|\alpha|$
\end{lemma}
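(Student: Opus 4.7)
The proof is a one-line double-counting argument, essentially already spelled out in the sentence preceding the lemma statement. My plan is as follows.

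First I would recall the definitions: $W$ is the set of vertices that are incident to at least one edge of $\alpha$, and $W$ is partitioned into $R$ (thin vertices of $W$) and $S$ (thick vertices of $W$), since every vertex is exactly one of thin or thick. Note that for any $v\notin W$ we have $|\alpha_v|=0$ by the definition of $W$, so
\[
\sum_{v\in V}|\alpha_v| \;=\; \sum_{v\in W}|\alpha_v| \;=\; \sum_{v\in R}|\alpha_v|+\sum_{v\in S}|\alpha_v| \;=\; r+s.
\]

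Second, I would compute the left-hand side by a standard incidence count: each edge $e\in\alpha$ has exactly two endpoints, and an edge $e$ contributes $1$ to $|\alpha_v|$ precisely when $v\in e$. Therefore
\[
\sum_{v\in V}|\alpha_v| \;=\; \sum_{e\in\alpha}\,\#\{v\in V : v\in e\} \;=\; 2|\alpha|.
\]
Combining the two displays gives $r+s=2|\alpha|$, as desired.

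There is no real obstacle here; the only thing to verify is the disjointness $R\cap S=\emptyset$ and the equality $R\cup S=W$, which both follow immediately from the definitions of thin and thick (every $v\in W$ satisfies either $|\alpha_v|<(1-\epsilon)Q/2$ or $|\alpha_v|\ge(1-\epsilon)Q/2$, exclusively). The content of the lemma is really just bookkeeping that will be used in subsequent estimates, where one wants to compare the contributions to $2|\alpha|$ coming from thin vertices (where one has good expansion in the links) versus thick vertices (where local minimality will be exploited).
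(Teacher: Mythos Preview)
Your argument is correct and is exactly the double-counting the paper has in mind; the paper itself just says ``every edge in $\alpha$ contributes $2$ to $r+s$'' and states the lemma without further proof. You have simply made explicit the partition $W=R\sqcup S$ and the incidence count $\sum_{v\in V}|\alpha_v|=2|\alpha|$, which is the same approach.
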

%\begin{proof} Every edge in $\alpha$ contributes $2$ to the left hand side.
%\end{proof}

\begin{lemma}~\label{lemma-edges-exiting-alpha-v}
\begin{enumerate}
\item~\label{item-one-lemma-edges-exiting-alpha-v} For every $v \in V$, $|E_{Y_v}(\alpha_v, \overline{\alpha_v})| \geq \frac{1}{2}(q+1-\sqrt{q})|\alpha_v|$.
\item~\label{item-two-lemma-edges-exiting-alpha-v} If $v$ is thin, then $|E_{Y_v}(\alpha_v, \overline{\alpha_v})| \geq \frac{(1+\epsilon)}{2}(q+1-\sqrt{q})|\alpha_v|$.
\end{enumerate}
\end{lemma}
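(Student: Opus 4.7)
The plan is a direct application of Proposition \ref{prop-cheeger} (the Alon--Milman inequality) to the link $Y_v$, once we identify its spectral gap using property $(B_2)$.

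First I would read off the first positive Laplacian eigenvalue of $Y_v$. By $(B_2)$, $Y_v$ is the $(q+1)$-regular bipartite point--line incidence graph of $\P^2(\F_q)$ on $Q=2(q^2+q+1)$ vertices, whose non-trivial adjacency eigenvalues are $\pm\sqrt{q}$. Since $\Delta_{Y_v}=(q+1)I-A_{Y_v}$, the smallest positive eigenvalue is
$$\lambda_1(Y_v)=(q+1)-\sqrt{q}.$$

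For part (\ref{item-one-lemma-edges-exiting-alpha-v}), I would view $\alpha_v\subseteq Y_v(0)$ and apply part (1) of Proposition~\ref{prop-cheeger} with $W=\alpha_v$. The local minimality hypothesis on $\alpha$ says $|\alpha_v|\le Q/2$, so $|\overline{\alpha_v}|/Q\ge 1/2$, and therefore
$$|E_{Y_v}(\alpha_v,\overline{\alpha_v})|\ge \frac{|\alpha_v|\,|\overline{\alpha_v}|}{Q}\,\lambda_1(Y_v)\ge \tfrac12(q+1-\sqrt{q})\,|\alpha_v|.$$

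For part (\ref{item-two-lemma-edges-exiting-alpha-v}), the thin hypothesis strengthens this: $|\alpha_v|<(1-\epsilon)Q/2$ gives $|\overline{\alpha_v}|/Q>(1+\epsilon)/2$, and the same inequality becomes
$$|E_{Y_v}(\alpha_v,\overline{\alpha_v})|\ge \tfrac{1+\epsilon}{2}(q+1-\sqrt{q})\,|\alpha_v|.$$

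There is no real obstacle here; the only thing to be careful about is that the spectral bound applies because $\alpha_v$ is on the ``small'' side of the cut (guaranteed by local minimality), so one may invoke the Cheeger-type inequality without further worry. This lemma is essentially an invocation of expansion of the link graph, and both inequalities follow from the single fact that $Y_v$ has spectral gap $q+1-\sqrt{q}$.
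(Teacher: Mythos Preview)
Your proof is correct and follows exactly the paper's approach: identify $\lambda_1(Y_v)=(q+1)-\sqrt{q}$ from $(B_2)$ and apply Proposition~\ref{prop-cheeger}(1) with $W=\alpha_v$, using local minimality (respectively thinness) to bound $|\overline{\alpha_v}|/Q$ from below by $\tfrac12$ (respectively $\tfrac{1+\epsilon}{2}$). The paper's own proof is a two-line sketch of precisely this argument, and you have simply spelled out the details.
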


\begin{proof} As mentioned in $(B_2)$, the link $Y_v$ is the "line versus points" graph of the projective plane. It is a $(q+1)$-regular graph whose eigenvalues are $\pm(q+1)$ and $\pm \sqrt{q}$. Hence,
$\lambda_1(Y_v) = (q+1)-\sqrt{q}$. Part~\ref{item-one-lemma-edges-exiting-alpha-v} now follows from Proposition~\ref{prop-cheeger}, and similarly
part~\ref{item-two-lemma-edges-exiting-alpha-v}.
\end{proof}

We can deduce

\begin{lemma}~\label{lemma-2t-1+2t-2}
$2t_1 + 2t_2 \geq (q+1-\sqrt{q})|\alpha| + \frac{\epsilon}{2} (q+1-\sqrt{q}) r$.
\end{lemma}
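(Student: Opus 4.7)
The plan is to combine the three lemmas already in hand, essentially by plugging them into one another. By Lemma~\ref{lemma-traingles-counting-in-dim-two}(\ref{item-three-lemma-traingles-counting-in-dim-two}), the quantity $2t_1+2t_2$ equals the sum $\sum_{v\in Y(0)} |E_{Y_v}(\alpha_v,\overline{\alpha_v})|$. A vertex $v\notin W$ has $\alpha_v=\emptyset$ and hence contributes nothing, so the sum is really over $W=R\sqcup S$, split into thin vertices $R$ and thick vertices $S$.

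Next I apply the link-level Cheeger-type bounds of Lemma~\ref{lemma-edges-exiting-alpha-v}. On the thin part $R$ I use the \emph{stronger} bound~(\ref{item-two-lemma-edges-exiting-alpha-v}), which gives a factor $(1+\epsilon)$ improvement; on the thick part $S$ I only have the generic bound~(\ref{item-one-lemma-edges-exiting-alpha-v}). Thus
$$2t_1+2t_2 \;\geq\; \tfrac{1+\epsilon}{2}(q+1-\sqrt{q})\,r \;+\; \tfrac{1}{2}(q+1-\sqrt{q})\,s.$$
I then rewrite the right-hand side as $\tfrac{1}{2}(q+1-\sqrt{q})(r+s) + \tfrac{\epsilon}{2}(q+1-\sqrt{q})\,r$.

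Finally, Lemma~\ref{lemma-$r+s$} replaces $r+s$ by $2|\alpha|$, and the desired inequality $2t_1+2t_2 \geq (q+1-\sqrt{q})|\alpha| + \tfrac{\epsilon}{2}(q+1-\sqrt{q})\,r$ drops out immediately.

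There is essentially no obstacle here: the proof is just bookkeeping. The only things to be careful about are (i) that the bound from Lemma~\ref{lemma-edges-exiting-alpha-v}(\ref{item-two-lemma-edges-exiting-alpha-v}) genuinely applies to every $v\in R$ (which it does by the definition of thinness), and (ii) that vertices outside $W$ contribute $0$ to the sum so we really do recover the full count $2t_1+2t_2$. The interesting content of the argument sits elsewhere, namely in the spectral fact that the link $Y_v$ is the point-line incidence graph of $\mathbb{P}^2(\mathbb{F}_q)$ with spectral gap $q+1-\sqrt{q}$; this lemma only harvests its consequences, keeping a small surplus $\tfrac{\epsilon}{2}(q+1-\sqrt{q})\,r$ coming from the thin vertices that will later be used to beat the loss incurred by the thick ones when one bounds $t_2$ or passes from $2t_1+2t_2$ to $t_1+t_3=|\delta_1\alpha|$.
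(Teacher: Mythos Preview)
Your proof is correct and essentially identical to the paper's own argument: you invoke Lemma~\ref{lemma-traingles-counting-in-dim-two}(\ref{item-three-lemma-traingles-counting-in-dim-two}) to express $2t_1+2t_2$ as the sum over vertices, split into thin and thick parts, apply the two bounds of Lemma~\ref{lemma-edges-exiting-alpha-v}, and finish with Lemma~\ref{lemma-$r+s$}. There is nothing to add.
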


\begin{proof}
$2t_1 + 2t_2 = \sum_{v \in W}E_{Y_v}(\alpha_v, \overline{\alpha_v})$.
The last equals to the following:
\begin{eqnarray}
= &   \sum_{v \in R}E_{Y_v}(\alpha_v, \overline{\alpha_v}) + \sum_{v \in S}E_{Y_v}(\alpha_v, \overline{\alpha_v}) \\
%\geq & \sum_{v \in R}\frac{(1+\epsilon)}{2}(q+1-\sqrt{q})|\alpha_v| \\
%+ & \sum_{v \in S}\frac{1}{2}(q+1-\sqrt{q})|\alpha_v| \\
\geq & \frac{(1+\epsilon)}{2}(q+1-\sqrt{q})r + \frac{1}{2}(q+1-\sqrt{q})s \\
= & \frac{1}{2}(q+1-\sqrt{q})(r+s)+\frac{\epsilon}{2} (q+1-\sqrt{q})r \\
= &  (q+1-\sqrt{q})|\alpha| + \frac{\epsilon}{2}(q+1-\sqrt{q})r
\end{eqnarray}
In the first equation we have used Lemma~\ref{lemma-traingles-counting-in-dim-two}, part (\ref{item-three-lemma-traingles-counting-in-dim-two}) and in the last one Lemma~\ref{lemma-$r+s$}. The inequality follows from Lemma~\ref{lemma-edges-exiting-alpha-v}.
\end{proof}
%\tnote{check, should it be epsilon divided by two?}

\begin{lemma}\label{lemma-t-1-minus-3t-3}
$t_1 - 3t_3 \geq \frac{\epsilon}{2} (q+1-\sqrt{q})r - \sqrt{q}|\alpha|$.
\end{lemma}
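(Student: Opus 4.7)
The plan is to derive this bound by a purely arithmetic combination of the two results already in hand: the edge-counting identity in Lemma~\ref{lemma-traingles-counting-in-dim-two}(\ref{item-one-lemma-traingles-counting-in-dim-two}), and the link-expansion lower bound in Lemma~\ref{lemma-2t-1+2t-2}. No new geometric input is required; the content of the lemma is really just a rewriting of the previous estimate into a form that later can be balanced against the contribution of $3t_3$ (the triangles fully inside $\alpha$), which is what will eventually be used to bound $|\delta_1(\alpha)| = t_1 + t_3$ from below.

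Concretely, I first use the identity $t_1 + 2t_2 + 3t_3 = (q+1)|\alpha|$ from Lemma~\ref{lemma-traingles-counting-in-dim-two}(\ref{item-one-lemma-traingles-counting-in-dim-two}) to solve for $2t_2$, getting $2t_2 = (q+1)|\alpha| - t_1 - 3t_3$. Adding $2t_1$ to both sides yields the key algebraic rewriting
\[
2t_1 + 2t_2 \;=\; (t_1 - 3t_3) + (q+1)|\alpha|,
\]
so that
\[
t_1 - 3t_3 \;=\; (2t_1 + 2t_2) - (q+1)|\alpha|.
\]

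Now I substitute the lower bound of Lemma~\ref{lemma-2t-1+2t-2}, namely $2t_1 + 2t_2 \geq (q+1-\sqrt{q})|\alpha| + \tfrac{\epsilon}{2}(q+1-\sqrt{q})r$, into the right-hand side. The $(q+1)|\alpha|$ terms cancel down to $-\sqrt{q}|\alpha|$, leaving exactly
\[
t_1 - 3t_3 \;\geq\; \tfrac{\epsilon}{2}(q+1-\sqrt{q})\,r \;-\; \sqrt{q}\,|\alpha|,
\]
which is the claim.

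There is essentially no obstacle here; the step is a one-line accounting lemma whose role is preparatory. The genuine work has already been done in Lemma~\ref{lemma-2t-1+2t-2}, where the spectral gap $\lambda_1(Y_v) = q+1-\sqrt{q}$ of the link (property~$(B_2)$) was invoked via Proposition~\ref{prop-cheeger}, and where the stronger expansion $\tfrac{1+\epsilon}{2}(q+1-\sqrt{q})|\alpha_v|$ on thin vertices was used to extract the $r$-term. The only thing to keep in mind while writing the proof is the sign bookkeeping: the bound is useful precisely because the positive $\tfrac{\epsilon}{2}(q+1-\sqrt{q})r$ term is of order $q\cdot r$, whereas the loss $\sqrt{q}|\alpha|$ is only of order $\sqrt{q}\cdot|\alpha|$, so combined with $r+s = 2|\alpha|$ one may eventually trade thin-vertex mass against the total mass to force $t_1$ to dominate $3t_3$, and hence $|\delta_1(\alpha)| = t_1 + t_3$ to be comparable to $|\alpha|$.
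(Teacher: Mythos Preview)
Your proof is correct and is essentially the same as the paper's: the paper simply says to subtract the identity $t_1+2t_2+3t_3=(q+1)|\alpha|$ from the inequality of Lemma~\ref{lemma-2t-1+2t-2}, which is exactly the arithmetic you carry out (rewritten via solving for $2t_2$ first). The additional commentary about why the resulting bound is useful is accurate and matches how the lemma is subsequently applied.
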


\begin{proof} Subtract equation (\ref{item-one-lemma-traingles-counting-in-dim-two}) in Lemma~\ref{lemma-traingles-counting-in-dim-two} form the equation obtained in Lemma~\ref{lemma-2t-1+2t-2}.
\end{proof}

Our goal now is to show that $r$, the contribution of the thin vertices, is greater than $c' \cdot|\alpha|$, where $c'$ does not depend on $Y$ (actually, it does not even depend on $q$). This will prove that for $q$ large enough $t_1 \geq cq|\alpha|$ and the theorem will follow. Up to now we have used only the local structure of $Y$, the links. Now we will use the global structure, the fact that its $1$-skeleton is almost a Ramanujan graph.

\begin{lemma}\label{lemma-bound-on-number-of-edges-within-thick-vertices} The total number of edges in $Y^{(1)}$ between the thick vertices is bounded as follows:
$$|E_{Y^{(1)}}(S)| \leq |\alpha|(\frac{1}{(1-\epsilon)^2(1+\epsilon')} + \frac{12q}{(1-\epsilon)Q}). $$
\end{lemma}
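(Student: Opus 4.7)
My plan is to derive the bound in three steps: first control the size of $S$, then apply the spectral expansion of the $1$-skeleton, and finally plug in the global upper bound on $|\alpha|$.

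\textbf{Step 1: bounding $|S|$.} Since every edge of $\alpha$ contributes to exactly two vertices of $W$, we have $\sum_{v \in V} |\alpha_v| = 2|\alpha|$, and so in particular $\sum_{v \in S}|\alpha_v| \leq 2|\alpha|$. By the definition of thickness each $v \in S$ satisfies $|\alpha_v| \geq (1-\epsilon)\frac{Q}{2}$, hence
$$|S| \leq \frac{4|\alpha|}{(1-\epsilon)Q}.$$

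\textbf{Step 2: spectral bound on $E_{Y^{(1)}}(S)$.} By property $(A_2)$ the graph $Y^{(1)}$ is $Q$-regular and every non-trivial adjacency eigenvalue has absolute value $\leq 6q$, so the smallest positive Laplacian eigenvalue satisfies $\lambda_1(Y^{(1)}) \geq Q - 6q$. Applying Proposition~\ref{prop-cheeger}(3) to the set $S$ (with $n=|Y(0)|$) yields
$$E_{Y^{(1)}}(S) \leq \frac{|S|}{2}\left(Q - \frac{n-|S|}{n}(Q-6q)\right) = \frac{|S|}{2}\left(6q + \frac{|S|}{n}(Q-6q)\right) \leq 3q|S| + \frac{Q|S|^2}{2n}.$$

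\textbf{Step 3: combining the bounds.} For the first term, Step 1 gives $3q|S| \leq \frac{12q|\alpha|}{(1-\epsilon)Q}$. For the second, again by Step 1
$$\frac{Q|S|^2}{2n} \leq \frac{Q}{2n}\cdot\frac{16|\alpha|^2}{(1-\epsilon)^2 Q^2} = \frac{8|\alpha|^2}{(1-\epsilon)^2 Q n}.$$
Now we use the standing assumption $|\alpha| \leq \frac{Qn}{8(1+\epsilon')}$ to absorb one factor of $|\alpha|$:
$$\frac{8|\alpha|^2}{(1-\epsilon)^2 Q n} = \frac{8|\alpha|}{(1-\epsilon)^2 Q n}\cdot|\alpha| \leq \frac{|\alpha|}{(1-\epsilon)^2(1+\epsilon')}.$$
Adding the two pieces gives exactly the claimed inequality
$$|E_{Y^{(1)}}(S)| \leq |\alpha|\left(\frac{1}{(1-\epsilon)^2(1+\epsilon')} + \frac{12q}{(1-\epsilon)Q}\right).$$

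There is no real obstacle here: all ingredients (the local thickness threshold, the Ramanujan-type spectral bound of $(A_2)$, and the global size hypothesis on $\alpha$) have already been set up. The only care needed is to keep track of the factor $(1-\epsilon)$ from the thickness bound and to ensure that the quadratic term $|S|^2$ is dominated by the linear term after inserting $|\alpha| \leq Qn/(8(1+\epsilon'))$; this is precisely what makes the $(1+\epsilon')$ in the denominator of the first summand appear.
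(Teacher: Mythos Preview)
Your proof is correct and follows essentially the same approach as the paper: bound $|S|$ via the thickness threshold, apply Proposition~\ref{prop-cheeger}(3) with $\lambda_1(Y^{(1)})\geq Q-6q$, and then substitute $|\alpha|\le Qn/(8(1+\epsilon'))$. The only cosmetic difference is in the algebraic bookkeeping of the term $\tfrac{|S|}{2}\bigl(Q-\tfrac{|\bar S|}{n}(Q-6q)\bigr)$, which you and the paper simplify in slightly different but equivalent ways.
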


\begin{proof}
Recall, that by $(B_2)$, the second largest eigenvalue of the adjacency matrix of $Y^{(1)}$ is bounded from above by $6q$.
So $\lambda_1(Y^{(1)}) \geq Q-6q=2{q^2}-4q+1$. Note now that every vertex in $S$ touches at least $(1-\epsilon)\frac{Q}{2}$ edges of $\alpha$, hence $|S| \leq \frac{2|\alpha|}{(1-\epsilon)\frac{Q}{2}}=\frac{4|\alpha|}{(1-\epsilon)Q}$. Proposition~\ref{prop-cheeger} implies therefore (when $n:=|Y(0)|$)

\begin{eqnarray}
|E(S)| & \leq & \frac{1}{2}(Q-\frac{|\overline{S}|}{n}\lambda_1(Y^{(1)}))|S| \\
     & \leq & \frac{1}{2}(Q-\frac{|\overline{S}|}{n}(Q-6q))|S| \\
     & =  & \frac{1}{2}(Q(1-\frac{|\overline{S}|}{n})+6q \frac{|\overline{S}|}{n})|S|\\
     & \leq & \frac{1}{2}(Q \frac{|S|}{n}+6q)|S| \\
     & \leq & \frac{1}{2}(\frac{4|\alpha|}{(1-\epsilon)n}+6q)|S|
\end{eqnarray}

Recall, that $|\alpha| \leq \frac{Qn}{8(1+\epsilon')}$ and hence, $|E(S)| \leq (\frac{2Q}{8(1-\epsilon)(1+\epsilon')}+3q)\frac{4|\alpha|}{(1-\epsilon)Q}=|\alpha|(\frac{1}{(1-\epsilon)^2(1+\epsilon')} + \frac{12q}{(1-\epsilon)Q})$.
\end{proof}

\begin{proof}(of Theorem~\ref{thm:isoparametric-dim-2})
We can now finish the proof of Theorem~\ref{thm:isoparametric-dim-2}. Choose $\epsilon > 0$ such that $\frac{1}{(1-\epsilon)^2(1+\epsilon')} < 1$
and then assume that $q$ is sufficiently large such that $\frac{1}{(1-\epsilon)^2(1+\epsilon')} + \frac{12q}{(1-\epsilon)Q} < 1-\xi < 1$, for some $\xi > 0$. This now means by Lemma~\ref{lemma-bound-on-number-of-edges-within-thick-vertices} that at most $(1-\xi)$ of the edges in $\alpha$ are between two thick vertices, namely, for at least $\xi|\alpha|$ edges, one of their endpoints is thin. This implies that $r \geq \xi|\alpha|$. Plugging this in Lemma~\ref{lemma-t-1-minus-3t-3}, we get $t_1  \geq  (\frac{\epsilon}{2}(q+1-\sqrt{q})\xi- \sqrt{q})|\alpha|$. Again, if $q$ is large enough this means that $|\delta_1(\alpha)| \geq t_1 \geq \epsilon_1 q |\alpha|$ and Theorem~\ref{thm:isoparametric-dim-2} is proved with $\eta_1=\frac{1}{4(1+\epsilon')}$.
\end{proof}

Let us mention that along the way we have proved two facts which are worth formulating separately.

\begin{cor}\label{cor-from-2-dim-result} In the notations and assumptions as above. For every $\epsilon' > 0$, if $q \geq q(\epsilon') \gg 0$, then we have:
\begin{enumerate}
\item If $\alpha \in B^1(X,\F_2)$ is a locally minimal coboundary with $|\alpha| < \frac{1}{4(1+\epsilon')}|X(1)|$ then $\alpha = 0$.
\item If $\alpha \in Z^1(X,\F_2) \setminus B^1(X,\F_2)$, then $|\alpha| > \frac{1}{4(1+\epsilon')}|X(1)|$. In particular, every representative of a non-trivial cohomology class has linear size support.
\end{enumerate}
%in particular, $q \gg 0$: There exists $\rho > 0$ such that:
%\begin{enumerate}
%\item If $\alpha \in B^1(X,\F_2)$ is a locally minimal coboundary with $|\alpha| < \frac{1}{4(1+\epsilon')}|X(1)|$ $\alpha = 0$.
%\item If $\alpha \in Z^1(X,\F_2) \setminus B^1(X,\F_2)$, then $|\alpha| > \frac{1}{4(1+\epsilon')}|X(1)|$. In particular, every representative of a non-trivial cohomology class has linear size support.
%\end{enumerate}
\end{cor}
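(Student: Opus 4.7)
The plan is to derive both parts as essentially immediate consequences of Theorem~\ref{thm:isoparametric-dim-2} applied at $i=1$, combined with the local-minimization lemma of Section~\ref{section:proof-main-thm}. The preliminary observation is a normalization: in the $2$-dimensional Ramanujan complex $X$ every edge lies on exactly $q+1$ triangles (property $(B_2)$), so all edges carry the same weight and $\|\alpha\| = |\alpha|/|X(1)|$. Hence the hypothesis $|\alpha| < \frac{1}{4(1+\epsilon')}|X(1)|$ is the same as $\|\alpha\| < \eta_1$, with $\eta_1 = \frac{1}{4(1+\epsilon')}$, which is exactly the threshold produced in the proof of Theorem~\ref{thm:isoparametric-dim-2}.

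For part~(1), I would argue directly. Let $\alpha \in B^1$ be locally minimal with $\|\alpha\| < \eta_1$. Theorem~\ref{thm:isoparametric-dim-2} gives $\|\delta_1 \alpha\| \geq \epsilon_1 \|\alpha\|$. Since $B^1 \subseteq Z^1 = \mathrm{Ker}(\delta_1)$, we have $\delta_1 \alpha = 0$, which forces $\|\alpha\| = 0$ and therefore $\alpha = 0$.

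For part~(2), I would argue by contradiction. Suppose $\alpha \in Z^1 \setminus B^1$ satisfies $|\alpha| \leq \frac{1}{4(1+\epsilon')}|X(1)|$, i.e.\ $\|\alpha\| \leq \eta_1$. Apply the lemma from Section~\ref{section:proof-main-thm} to pass to a locally minimal $\tilde\alpha \in \alpha + B^1$ with $\|\tilde\alpha\| \leq \|\alpha\| \leq \eta_1$. Since $\tilde\alpha$ differs from $\alpha$ by an element of $B^1 \subseteq Z^1$, we still have $\tilde\alpha \in Z^1$ and $\tilde\alpha \notin B^1$; in particular $\tilde\alpha \neq 0$. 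But Theorem~\ref{thm:isoparametric-dim-2} then yields $\|\delta_1 \tilde\alpha\| \geq \epsilon_1 \|\tilde\alpha\| > 0$, contradicting $\delta_1 \tilde\alpha = 0$. The final sentence (``every representative of a non-trivial cohomology class has linear size support'') is just the unpacking of this statement for an arbitrary $\alpha \in Z^1 \setminus B^1$.

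I do not anticipate any real obstacle here; the corollary is the intended packaging of Theorem~\ref{thm:isoparametric-dim-2}. The one point that deserves a sentence of justification is that the Section~\ref{section:proof-main-thm} local-minimization procedure modifies a cochain only by coboundaries — each ``correction'' at a vertex $v$ replaces $\alpha$ by $\alpha + \delta_0(\tilde\gamma)$ with $\tilde\gamma$ of bounded support — so it preserves both the coset modulo $B^1$ and membership in $Z^1$. Once this is observed, both parts are two-line deductions.
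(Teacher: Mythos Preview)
Your proposal is correct and matches the paper's approach: the paper does not give a separate proof of the corollary but explicitly says these two facts were ``proved along the way,'' i.e., they are exactly the deductions you spell out from Theorem~\ref{thm:isoparametric-dim-2} (with $\eta_1=\frac{1}{4(1+\epsilon')}$) together with the local-minimization step, just as in the Section~\ref{section:proof-main-thm} argument. One small simplification for part~(2): you do not need the quantitative Section~\ref{section:proof-main-thm} lemma at all---simply take $\tilde\alpha$ to be a \emph{minimal} representative of the coset $\alpha+B^1$, which is automatically locally minimal (as noted in Section~\ref{section:coboundary-expansion}) and has $\|\tilde\alpha\|\le\|\alpha\|$.
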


This is a systolic inequality. This is actually a very strong systolic lower bound as any class of $Z^1$ has a representative $\alpha$ with $|\alpha| \leq \frac{|X(1)|}{2}$, and $\epsilon'$ can be chosen as small as we wish.
%(paying a price in $q \gg 0$ (i.e. $q>q_0=q_0(d)$)).
Note that as shown in~\cite{KaufmanKazhdanLubotzky}, there are indeed cases that $H^1(X,\F_2) \neq \{0\}$, so the second item of Corollary~\ref{cor-from-2-dim-result} is a non-vacuum systolic result. Such results are of potential interest for quantum error-correcting codes (see \cite{Zemor},\cite{LubotzkyGuth} and the references therein).

%Let us note that along the way we have proved here a strong $\F_2$-systolic inequality. Such results are of potential applications for quantum %error-correcting codes (see \cite{Zemor},\cite{LubotzkyGuth} and the references therein).

We finish by outlining the additional difficulties in proving Theorem~\ref{thm:isoparametric}. Some of the difficulties are technical, e.g., the difference between $(B_2)$ and $(B_3)$ where the links are more complicated. A more essential difficulty is with the notion of "thinness": the appropriate notion of thinness for vertices and edges in the high dimensional case is far from being obvious. See~\cite{KaufmanKazhdanLubotzky} for more.
%for edges in the $3$-dimensional case the definition is analogous to the one for vertices in the $2$-dimensional case. But, the main difficulty is with 

\remove{
We end up with saying that the proof of Theorem~\ref{thm:isoparametric} is different than the proof of Theorem~\ref{thm:isoparametric-dim-2} mainly because of the difference between $(B_2)$ in this section and $(B_3)$ in section~\ref{section:isoparametric}, where the link is a $3$-partite and therefore induces $3={3 \choose 2}$ bipartite graphs, each has to be handled separately. Also, working with $2$-cochains instead of $1$-cochains put more challenges. For example, when $\mbox{dim}Y=2$, $Y_v$ is the "lines versus points" graph whose expansion properties are easy to handle. But, when $\mbox{dim}Y=3$, $Y_v$ is a $2$-dimensional spherical building and for its expansion properties we need to refer to~\cite{Gromov} and~\cite{LubotzkyMeshulamMozes}. Still, the basic idea is the same, we define thin and thick vertices and also thin and thick edges and analyze their contributions. This is done by local consideration (by detailed study of the links). Then, we have to prove some non-trivial lower bounds on the total contribution of the thin vertices. This is where the global structure of $Y^{(1)}$ comes into the game and where the assumption that $||\alpha|| \leq \eta$ has to be assumed. Full details can be found in~\cite{KaufmanKazhdanLubotzky}.
}

\section{Acknowledgments}
The authors are grateful to G. Kalai, R. Meshulam, E. Mossel, S. Mozes, A. Rapinchuk, J. Solomon and U. Wagner for useful discussions and advice. We thank also the ERC, ISF, BSF and NSF for their support.


\begin{thebibliography}{99}
\bibitem{Barany} I. B\'{a}r\'{a}ny. {\em A generalization of Carath\'{e}odorys theorem,} Discrete
Mathematics, 40(2):141-–152, 1982.

%\bibitem{BenAriVishne} BenAriVishne

\bibitem{BorosFuredi} E. Boros and Z. F\"{u}redi. {\em The number of triangles covering the center
of an n-set,} Geometriae Dedicata, 17(1):69-–77, 1984.

\bibitem{CS} D.I. Cartwright and T. Steger. {\em A family of $\tilde{A}_n$-groups,} Israel Journal
of Mathematics, 103(1):125–-140, 1998


\bibitem{DK} D. Dotterrer and M. Kahle. {\em Coboundary expanders.} Journal of Topology and Analysis, 4:
499--514, 2012.

\bibitem{EvraGLubotzky} S. Evra, K. Golubev and A. Lubotzky {\em Mixing properties and the chromatic number of Ramanujan complexes}, arXiv:1407.7700, 2014.

%\bibitem{Evra-thesis} Evra-thesis


\bibitem{FGLNP} J. Fox, M. Gromov, V. Lafforgue, A. Naor, and J. Pach. {\em Overlap
properties of geometric expanders.} Journal für die reine und angewandte Mathematik 671: 49--83, 2012.


\bibitem{Gromov} M. Gromov. {\em Singularities, expanders and topology of maps. part 2:
From combinatorics to topology via algebraic isoperimetry.} Geometric
and Functional Analysis, 20(2):416-–526, 2010.

\bibitem{LubotzkyGuth} L. Guth and A. Lubotzky  {\em Quantum error-correcting codes and 4-dimensional arithmetic hyperbolic manifolds}, Journal of Mathematic Phisics, to appear,  arXiv:1310.5555, 2013.


%\bibitem{HeeOh} HeeOh

\bibitem{HooryLinialWigderson} S. Hoory, N. Linial, and A. Wigderson. {\em Expander graphs and
their applications.} Bulletin of the American Mathematical Society,
43(4):439-–562, 2006.


\bibitem{KaufmanKazhdanLubotzky} T. Kaufman, D. Kazhdan and A. Lubotzky. {\em Isoperimetric Inequalities for Ramanujan Complexes
and Bounded Degree Topological Expanders}, preprint, 2014.


\bibitem{KaufmanLubotzkyPT} T. Kaufman and  A. Lubotzky. {\em High dimensional expanders and property testing.} ITCS 2014: 501-506.


\bibitem{KaufmanWagner} T. Kaufman, U. Wagner. In preparation.


\bibitem{LinialMeshulam} N. Linial and R. Meshulam. {\em Homological connectivity of random
2-complexes.} Combinatorica, 26(4):475–-487, 2006.



\bibitem{LubotzkyAMS} A. Lubotzky. {\em Expander graphs in pure and applied mathematics.}
Bull. Amer. Math. Soc, 49:113–-162, 2012.

\bibitem{LubotzkyJapan} A. Lubotzky. {\em Ramanujan Complexes and High Dimensional Expanders}, Japanees Journal of Math, to appear, arXiv:1301.1028, 2013.

\bibitem{LubotzkyBook} A. Lubotzky. {\em  Discrete Groups, Expanding Graphs and Invariant
Measures}, volume 125 of Progress in Mathematics. Birkhauser, 1994.


\bibitem{LubotzkyMeshulam} A. Lubotzky and R. Meshulam. {\em Random Latin squares and 2-dimensional expanders},  arXiv:1307.3582, 2013.

\bibitem{LubotzkyMeshulamMozes} A. Lubotzky, R. Meshulam and S. Mozes. {\em Expansion of building-like complexes},  arXiv:1407.6303, 2014.


\bibitem{LSV1} A. Lubotzky, B. Samuels, and U. Vishne. {\em Ramanujan complexes of
type $\tilde{A}_d$. Israel Journal of Mathematics}, 149(1):267–-299, 2005.


\bibitem{LSV2} A. Lubotzky, B. Samuels, and U. Vishne. {\em Explicit constructions of
ramanujan complexes of type $\tilde{A}_d$.} Eur. J. Comb., 26(6):965–-993, 2005.

\bibitem{MarcusSpielmanSrivastavaInterlacing} A. Marcus, D. A. Spielman and N. Srivastava. {\em Interlacing Families I: Bipartite Ramanujan Graphs of All Degrees.} FOCS 2013: 529-537.


\bibitem{MeshulamWallach} R. Meshulam and N.Wallach. {\em Homological connectivity of random k-dimensional
complexes.} Random Structures and Algorithms, 34(3):408-–417, 2009.

\bibitem{NewmanRabinovich} I. Newman and Y. Rabinovich {\em On Multiplicative Lambda-Approximations and Some Geometric Applications}. SIAM J. Comput. 42(3): 855--883, 2013.

%\bibitem{Parzanchevsky} Parzanchevsky
%\bibitem{Rilves} Rilves

\bibitem{Serre} J.P. Serre, {\em Le probleme des groupes de congruence pour $SL_2$}, Annals of Mathematics, 92(3), 489--527, 1970.

\bibitem{Zemor} G. Zemor, {\em On Cayley graphs, surface codes, and the limits of homological
coding for quantum error correction},  In Coding and Cryptology, IWCC(5557) of LNCS, pages 259–-273, 2009.

%\bibitem{Zemor} G. Zemor, {\em On Cayley graphs, surface codes, and the limits of homological
%coding for quantum error correction},  In Coding and Cryptology, 2nd international
%Workshop IWCC, volume 5557 of LNCS, pages 259–-273, 2009.

\end{thebibliography}
\end{document}